\newcommand{\R}{\mathbb{R}}
\newcommand{\F}{\mathcal{F}}
\newcommand{\E}{\mathcal{E}}
\numberwithin{equation}{section}
\theoremstyle{plain}
\newtheorem{lem}[equation]{Lemma}
\newtheorem{thm}[equation]{Theorem}
\newtheorem{prop}[equation]{Proposition}
\newtheorem{cor}[equation]{Corollary}
\theoremstyle{definition}
\theoremstyle{remark}
\newtheorem{remark}[equation]{Remark}
\begin{document}
\title{Zarankiewicz's problem for semi-algebraic hypergraphs}
\author{Thao Do}
\address{Department of Mathematics, Massachusetts Institute of Technology, Cambridge MA 02139}
\email{thaodo@mit.edu}
\maketitle
\begin{abstract}
Zarankiewicz's problem asks for the largest possible number of edges in a graph that does not contain a $K_{u,u}$ subgraph for a fixed positive integer $u$. Recently, Fox, Pach, Sheffer, Sulk and Zahl \cite{Fox} considered this problem for semi-algebraic graphs, where vertices are points in $\mathbb{R}^d$ and edges are defined by some semi-algebraic relations. In this paper, we extend this idea to semi-algebraic hypergraphs. For each $k\geq 2$, we find an upper bound on the number of hyperedges in a $k$-uniform $k$-partite semi-algebraic hypergraph without $K_{u_1,\dots,u_k}$ for fixed positive integers $u_1,\dots, u_k$. When $k=2$, this bound matches the one of Fox et.al. and when $k=3$, it is 
$$O\left((mnp)^{\frac{2d}{2d+1}+\varepsilon}+m(np)^{\frac{d}{d+1}+\varepsilon}+n(mp)^{\frac{d}{d+1}+\varepsilon}+p(mn)^{\frac{d}{d+1}+\varepsilon}+mn+np+pm\right),$$ where $m,n,p$ are the sizes of the parts of the tripartite hypergraph and $\varepsilon$ is an arbitrarily small positive constant. We then present applications of this result to a variant of the unit area problem,  the unit minor problem and intersection hypergraphs.

\end{abstract}

\section{Introduction}

The problem of Zarankiewicz \cite{Zarankiewcz} is a central problem in graph theory. It asks for the largest possible number of edges in an $m\times n$  bipartite graph that avoids $K_{u,u}$ for some fixed positive integer $u$. Here $K_{u,u}$ denotes the complete bipartite graph of size $u\times u$, and we say a graph $G$ avoids $H$ or $G$ is $H$-free if $G$ does not contain any subgraph congruent to $H$.  In 1954, K\H{o}v\'ari, S\'os and Tur\'an proved a general upper bound of form $O_u(mn^{1-1/u}+n)$, which is only known to be tight for $u=2$ and $u=3$. 

A natural context in which such bipartite graphs emerge is in incidence geometry, where edges represent incidences between two families of geometric objects, such as points and lines. Better bounds are known in certain of these cases, i.e. points and lines in $\R^2$ (Szemer\'edi-Trotter theorem \cite{S-T}), points and curves in $\R^2$ (Pach-Sharir \cite{Pach-Sharir}), and points and hyperplanes in $\R^d$ (Apfelbaum- Sharir \cite{Ap-Sharir}). Recently, Fox, Pach, Sheffer, Suk and Zahl \cite{Fox} generalized these results to all \emph{semi-algebraic graphs} (defined below).

Fixing some positive integers $d_1,d_2$, let $G=(P,Q,\mathcal{E})$ be a bipartite graph on sets $P$ and $Q$, where we think of $P$ as a set of $n$ points in $\R^{d_1}$ and $Q$ as a set of $m$ points in $\R^{d_2}$. We say $G$ is \emph{semi-algebraic with description complexity $t$} if there are $t$ polynomials $f_1,\dots, f_t\in \R[x_1,\dots, x_{d_1+d_2}]$, each of degree at most $t$ and a Boolean function $\Phi(X_1,\dots, X_t)$ such that for any $p\in P, q\in Q$: 
$$ (p,q)\in \mathcal{E} \iff \Phi(f_1(p,q)\geq 0,\dots, f_t(p,q)\geq 0)=1.$$
In other words, we can describe the incidence relation using at most $t$ inequalities involving polynomials of degree at most $t$. We now restate more formally the main result of \cite{Fox}: 

\begin{thm}[Fox, Pach, Sheffer, Suk and Zahl 2015 \cite{Fox}]\label{Fox}
Given a bipartite semi-algebraic graph $G=(P,Q,\mathcal{E})$ with description complexity $t$, if $G$ avoids $K_{u,u}$ then for any $\varepsilon>0$, $$|\mathcal{E}(G)|= O_{t,d_1,d_2,u,\varepsilon}\left(m^{\frac{d_1d_2-d_2}{d_1d_2-1}+\varepsilon}n^{\frac{d_1d_2-d_1}{d_1d_2-1}}+m+n\right).$$ 
When $d_1=d_2=2$ we can delete the $\varepsilon$ term. Moreover, if $P$ belongs to an irreducible variety of degree $D$ and dimension $e_1$ (with $e_1\leq d_1$) then 
$$|\mathcal{E}(G)|= O_{t,d_1,d_2,e_1, D, k,\varepsilon}\left(m^{\frac{e_1d_2-d_2}{e_1d_2-1}+\varepsilon}n^{\frac{e_1d_2-e_1}{e_1d_2-1}}+m+n\right).$$
\end{thm}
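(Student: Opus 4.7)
The plan is to prove Theorem~\ref{Fox} by a double induction on $n$ and on the dimension $d_2$, using polynomial partitioning on $Q\subset\R^{d_2}$ in tandem with the ``moreover'' strengthening on $P$. The two halves of the statement (with arbitrary $P$, and with $P$ on an $e_1$-dimensional variety) must be proved simultaneously, since the latter is needed to close the recursion of the former.

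First, for each $p\in P$ define $\gamma_p=\{q\in\R^{d_2}:\Phi(f_1(p,q)\ge 0,\dots,f_t(p,q)\ge 0)=1\}$. Each $\gamma_p$ is a semi-algebraic subset of $\R^{d_2}$ whose description complexity depends only on $t$ and $d_2$, and the hypothesis that $G$ is $K_{u,u}$-free translates to the combinatorial condition that no $u$ of the $\gamma_p$ share $u$ common points of $Q$. Apply polynomial partitioning to $Q$: produce $f\in\R[x_1,\dots,x_{d_2}]$ of degree $r$ (to be chosen) such that $\R^{d_2}\setminus Z(f)$ decomposes into $N=O(r^{d_2})$ open cells $\Omega_i$, each meeting at most $m/r^{d_2}$ points of $Q$. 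Let $Q_0=Q\cap Z(f)$, $Q_i=Q\cap\Omega_i$, and split $|\mathcal{E}|=|\mathcal{E}_0|+\sum_{i\ge 1}|\mathcal{E}_i|$ accordingly.

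To bound $\sum_{i\ge 1}|\mathcal{E}_i|$, a Barone--Basu type estimate shows that each semi-algebraic set $\gamma_p$ meets at most $O(r^{d_2-1})$ of the cells. Writing $n_i=|\{p:\gamma_p\cap\Omega_i\ne\emptyset\}|$ this gives $\sum_i n_i\le Cnr^{d_2-1}$, while $m_i\le m/r^{d_2}$. Apply the inductive hypothesis (on $n$) inside each cell to bound $|\mathcal{E}_i|$ by $C_1(m_i^{\alpha+\varepsilon}n_i^{\beta}+m_i+n_i)$ with $\alpha=\tfrac{d_1d_2-d_2}{d_1d_2-1}$, $\beta=\tfrac{d_1d_2-d_1}{d_1d_2-1}$, then sum using H\"older's inequality. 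For $\mathcal{E}_0$, the set $Q_0$ lies on the hypersurface $Z(f)$, an algebraic variety of dimension $\le d_2-1$ and degree $r$; after decomposing into irreducible components and applying the ``moreover'' version of the theorem with the roles of $P,Q$ swapped and $e=d_2-1$, one obtains a bound with better $d_2$-dependence. Choosing $r=m^{a}n^{b}$ to balance the two contributions yields the claimed main term. In the special case $d_1=d_2=2$ the recursion closes after a single step on each side so the $\varepsilon$ loss can be eliminated, while the variety version is handled by running the same argument on $P$ constrained to the ambient variety of dimension $e_1$, which replaces $d_1$ by $e_1$ wherever it appears.

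The main obstacle is the calibration of the induction: one must simultaneously control a recursion in $n$ (via the in-cell estimates) and a recursion in $d_2$ (via the on-variety estimates coming from $Q_0$), and choose $r$ so that neither contribution dominates. Each drop in dimension spends a small polynomial factor, and it is precisely the logarithmic number of dimension-reduction steps that yields the unavoidable $n^{\varepsilon}$ loss in the general statement. The careful bookkeeping of constants across the two inductions — and the fact that the ``moreover'' statement must be proved in lockstep rather than derived afterwards — is where the technical weight of the argument lies.
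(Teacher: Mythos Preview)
Your proposal has the right partitioning-plus-induction skeleton, but the $K_{u,u}$-free hypothesis is stated and then never actually used; without it the bounds cannot close. There are two concrete gaps.

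First, the claim that each $\gamma_p$ meets $O(r^{d_2-1})$ cells is false as stated: the $\gamma_p$ are full-dimensional semi-algebraic regions and a single $\gamma_p$ may contain every one of the $O(r^{d_2})$ cells. A Milnor--Thom type estimate bounds only the number of cells that $\gamma_p$ \emph{properly crosses}, i.e.\ those that its $(d_2-1)$-dimensional boundary enters. The paper therefore separates out a term $I_2$ counting incidences in cells fully contained in some $\gamma_p$, and bounds it directly via $K_{u,u}$-freeness: a cell holding at least $u$ points of the partitioned set can lie in at most $u-1$ of the neighbourhood regions. Your recursion has no such term, hence no place where the hypothesis bites.

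Second, and more structurally, the paper has a genuine ``step one'' that you omit: before any partitioning, a primal-shatter-function/packing argument (Lemma~\ref{lem packing}) combined with $K_{u,u}$-freeness yields the crude bound $|\mathcal{E}|\lesssim mn^{1-1/d_2}+n$. This is not a base case but a range reduction: it allows one to assume $n\le m^{d_2}$ (dually $m\le n^{d_1}$), and that inequality is exactly what forces the on-variety contribution at ambient dimension $e_1-1$ (or $d_2-1$ in your dual setup) to be dominated by the target bound (cf.\ Lemma~\ref{compare d and d-1}). Without step one your dimension-drop term need not be absorbed. Relatedly, the paper takes $r$ to be a \emph{constant} depending only on $\varepsilon$, so that the size induction gains a factor $r^{-\varepsilon}$ per step; your choice $r=m^a n^b$ is incompatible with the induction-on-$n$ scheme you describe.
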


As many incidence graphs are semi-algebraic, this theorem and its proof method not only imply the incidence bounds mentioned previously (modulo the extra $\varepsilon$), but also imply many new ones. 
It is natural to ask if similar results hold for semi-algebraic hypergraphs.

\subsection*{Semi-algebraic hypergraphs}
Fix some integer $k\geq 2$. A hypergraph $H$ is called $k-$uniform  if each hyperedge is a $k$-tuple of its vertices. It is $k$-partite if its vertices can be partitioned into $k$ disjoint subset $P_1,\dots, P_k$ and each hyperedge is some tuple $(p_1,\dots, p_k)$ where $p_i\in P_i$ for $i=1,\dots, k$. We usually use $\mathcal{E}$, or $\mathcal{E}(H)$ to denote the set of hyperedges of $H$. 

Fix some positive integers $d_1,\dots, d_k$ and $t$. Let $H$ be a $k$-uniform $k$-partite hypergraph $H=(P_1,\dots, P_k, \mathcal{E})$ where $P_i$ is a set of $n_i$ points in $\R^{d_i}$ for $i=1,\dots, k$ and $\mathcal{E}$ is the set of all hyperedges. This hypergraph is said to be \emph{semi-algebraic with description complexity $t$} if there are $t$ polynomials $f_1,\dots,f_t\in \R[x_1,\dots, x_{d_1+\dots+d_k}]$, each of degree at most $t$, and a Boolean function $\Phi(X_1,\dots, X_t)$ such that for any $p_i\in P_i$, $i=1\dots, k$:
$$(p_1,\dots, p_k)\in \mathcal{E} \iff \Phi(f_1(p_1,\dots, p_k)\geq 0,\dots, f_t(p_1,\dots, p_k)\geq 0)=1.$$

Semi-algebraic hypergraphs have been the subject of much recent work (see for example \cite{Ramsey hypergraph, Semi-hypergraph regularity lemma2,Semi-hypergraph regularity lemma}), the main theme of which is that many classical theorems about hypergraphs (such as Ramsey's theorem and Szemer\'edi's regularity lemma) can be significantly improved in the semi-algebraic setting.
Since the graphs and hypergraphs arising in discrete geometry problems are often semi-algebraic (with low description complexity), such improved results have many applications there. 
Our paper follows this paradigm, improving upon a result of Erd\H{o}s regarding Zarankiewicz's problem for semi-algebraic hypergraphs. 

We first recall the classical result. Given positive integers $u_1,\dots, u_k$, let $K_{u_1,\dots, u_k}$ denote the complete $k-$uniform $k$-partite hypergraph $(U_1,\dots, U_k, \mathcal{E})$ where $|U_i|=u_i$ and any $k$-tuple in $U_1\times \dots\times U_k$ is a hyperedge. Zarankiewicz's problem for hypergraphs asks for the maximum number of hyperedges in a $k$-uniform hypergraph that does not contain a copy of $K_{u_1,\dots, u_k}$.
  The first statement in the following theorem was proved by Erd\H{o}s in \cite{Erdos}. Using his proof method, we obtain the second statement whose proof can be found in appendix \ref{appendix: Erdos}.

\begin{thm}[Erd\H{o}s 1964 \cite{Erdos}]\label{Erdos}  A $k$-uniform $K_{u,\dots, u}$-free hypergraph $H$ on $n$ vertices has at most $O_{u,k} (n^{k-\frac{1}{u^{k-1}}})$ hyperedges. 
More generally, if the $k$-partite $k$-uniform hypergraph $H=(P_1,\dots, P_k, \mathcal{E})$  is  $K_{u_1,\dots, u_k}$-free, then 
$|\mathcal{E}|=O_{u_1,\dots, u_k, k}\left(\left(n_k^{-1/u_1\dots u_{k-1}}+n_1^{-1}+\dots+ n_{k-1}^{-1}\right)\prod_{i=1}^k n_i \right)$,
where $n_i=|P_i|$ for $i=1,\dots, k$.
\end{thm}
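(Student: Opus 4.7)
The plan is to extend the classical Kővári-Sós-Turán double-counting argument from bipartite graphs to $k$-uniform $k$-partite hypergraphs. For each $p_k \in P_k$, introduce the link $L(p_k) := \{(p_1,\ldots,p_{k-1}) : (p_1,\ldots,p_k)\in\mathcal{E}\}$, a $(k-1)$-uniform $(k-1)$-partite hypergraph on $P_1,\ldots,P_{k-1}$, and set $d(p_k) = |L(p_k)|$, so that $|\mathcal{E}| = \sum_{p_k}d(p_k)$. Let $T$ denote the number of tuples $(p_k; A_1,\ldots,A_{k-1})$ with $A_i \in \binom{P_i}{u_i}$ and $A_1\times\cdots\times A_{k-1}\subseteq L(p_k)$; the whole proof revolves around estimating $T$ from both sides.

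For the upper bound, the $K_{u_1,\ldots,u_k}$-free hypothesis forces any fixed $(A_1,\ldots,A_{k-1})$ to lie inside at most $u_k - 1$ links (otherwise $u_k$ witnesses in $P_k$ would complete a forbidden copy), giving
\[T \le (u_k - 1)\prod_{i=1}^{k-1}\binom{n_i}{u_i}.\]
For the lower bound, I would apply Jensen's inequality iteratively. For fixed $p_k$, let $\rho = d(p_k)/\prod_{i<k}n_i$; iterated convexity applied along each of the $k-1$ coordinate axes of $L(p_k)$ yields the standard supersaturation estimate
\[\#\{K_{u_1,\ldots,u_{k-1}}\text{ in }L(p_k)\} \gtrsim \prod_{i=1}^{k-1}\binom{n_i}{u_i}\cdot \rho^{u_1\cdots u_{k-1}},\]
valid when every intermediate average codegree exceeds the corresponding threshold $u_i$. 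Summing over $p_k \in P_k$ and applying Jensen once more to the convex function $x\mapsto x^{u_1\cdots u_{k-1}}$, using $\sum_{p_k} d(p_k) = |\mathcal{E}|$, yields
\[T \gtrsim \prod_{i=1}^{k-1}\binom{n_i}{u_i}\cdot n_k\left(\frac{|\mathcal{E}|}{n_k\prod_{i<k}n_i}\right)^{u_1\cdots u_{k-1}}.\]

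Comparing the two bounds and solving for $|\mathcal{E}|$ produces the main term $O\!\left(n_k^{1-1/(u_1\cdots u_{k-1})}\prod_{i<k}n_i\right)$, which is exactly the summand $n_k^{-1/(u_1\cdots u_{k-1})}\prod_i n_i$ in the stated bound; specializing to $k=2$ recovers Kővári-Sós-Turán, and setting $n_i = n/k$, $u_i = u$ recovers the symmetric version $O(n^{k-1/u^{k-1}})$. The principal technical obstacle is not the Jensen step itself but the bookkeeping of low-degree exceptions at each of the $k-1$ levels of the iteration: whenever one of the inner average codegrees drops below the corresponding threshold $u_i$, the convex estimate collapses and one must separately bound the contribution of those ``low'' partial tuples by $(u_i-1)\prod_{j\neq i}n_j$, which is precisely the term $n_i^{-1}\prod_j n_j$ of the theorem. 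This stratification, performed once per index $i\in\{1,\ldots,k-1\}$, is routine but tedious, and will be executed in Appendix~\ref{appendix: Erdos}.
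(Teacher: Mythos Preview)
Your proposal is correct but is organized differently from the paper's proof. The paper proceeds by induction on $k$: it double-counts the quantity $Q = \#\{(y, x_1, \ldots, x_{u_1}) : y \in P_2 \times \cdots \times P_k,\ x_i \in P_1,\ (x_i, y) \in \mathcal{E}\ \forall i\}$, bounding it above by noting that for each fixed $(x_1,\ldots,x_{u_1})$ the set of valid $y$'s forms a $K_{u_2,\ldots,u_k}$-free $(k-1)$-partite hypergraph (so the inductive hypothesis applies directly), and bounding it below by a \emph{single} H\"older step. You instead distinguish $P_k$, count copies of $K_{u_1,\ldots,u_{k-1}}$ inside each link $L(p_k)$ via iterated Jensen (supersaturation), and invoke the $K_{u_1,\ldots,u_k}$-free hypothesis only once at the end. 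The paper's route is slightly cleaner because the error terms $n_i^{-1}\prod_j n_j$ for $2\le i\le k-1$ are already packaged in the inductive hypothesis, whereas in your version you must peel them off level by level during the iterated Jensen; on the other hand, your approach is self-contained and does not require formulating an inductive statement. Both are the same idea at heart---your argument is essentially the paper's induction unrolled---and both land on identical bounds.
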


\begin{remark}In the preliminary report on arXiv of \cite{Fox}, Fox et.al. improved this bound for semi-algebraic hypergraphs (Corollary 6.11 therein), but their proof is flawed: they claimed a $K_{u,\dots, u}$-free semi-algebraic hypergraph  $H=(P_1,\dots, P_k,\mathcal{E})$ is a semi-algebraic $K_{s,s}$-free bipartite graph between $P=\cup_{i\in S_1} P_i$ and $Q=\cup_{j\in S_2} P_j$ for any partition $S_1\cup S_2=[k]$ and some $s$ that only depends on $u,k$. It is true that this new graph is semi-algebraic with bounded complexity; however, it may not be  $K_{s,s}$-free for any fixed $s$. For example, the unit minor hypergraph in $\R^d$ on $n$ vertices (see \ref{sec:unit minor}) does not contain $K_{2,\dots, 2}$ but may contain $K_{1,(n-1)/d,\dots, (n-1)/d}$. 
\end{remark}

In this paper we present a way to extend theorem \ref{Fox} to semi-algebraic hypergraphs. Ultimately we will prove the number of hyperedges is bounded by a function of $n_1,\dots, n_k$, with the exponents depending on $d_i$, in this way resembling Theorem \ref{Fox}. However, the formulas involved are sufficiently complicated that we need to fix some notation before stating them precisely.

\subsection*{Notation}
Let $\vec{d}=(d_1,\dots, d_k)$ and $\vec{n}=(n_1,\dots, n_k)$ be  vectors in $\mathbb{Z}_+^k$.
For each $\vec{d}$, define a function $E_{\vec{d}}(\vec{n}):\R^k\to R$ via:
\begin{equation}\label{defi_E}
E_{\vec{d}}(\vec{n}):=E_{d_1,\dots, d_k}(n_1,\dots, n_k):= \prod_{i=1}^k n_i^{1-\frac{\frac{1}{d_i-1}}{k-1+\frac{1}{d_1-1}+\dots+\frac{1}{d_k-1}}}.
\end{equation}
For example, $E_{d}(n)=1$ for all $d$ and $n$, and $E_{d_1, d_2}(m,n) =m^{1-\frac{d_2-1}{d_1d_2-1}}n^{1-\frac{d_1-1}{d_1d_2-1}}$. \footnote{Note that here we do not require $d_i\geq  2$ because we can multiply the numerator and denominator of the exponent by $\prod_{i=1}^k (d_i-1)$ to get rid of the term $1/(d_i-1)$.}  
The function $E_{\vec{d}}(\vec{n})$  satisfies many nice properties that are discussed in subsection \ref{sec:property_EF}.
Let $[k]$ denote the set $\{1,\dots,k\}$, and for a subset $I\subset [k]$ let $\vec{d_I}$ denote the vector $(d_i)_{i\in I}\in \R^{|I|}$, and similarly let $\vec{n_I}=(n_i)_{i\in I}$. 
For $i\in [k]$, let $\pi_i$ be the projection of $\R^k$ to $\langle \frak{e}_i \rangle^{\perp}$; i.e. for any vector $\vec{a}\in \R^k$, $\pi_i(\vec{a})=(a_1,\dots,a_{i-1},a_{i+1},\dots,a_k)$. 
For each $\varepsilon>0$ and each $\vec{d}$,  define a function $F^\varepsilon_{\vec{d}}:\R^k\to \R$ as follows:
\begin{equation}\label{defi_F}
F^\varepsilon_{\vec{d}}(\vec{n}):=\sum_{I\subset [k], |I|\geq 2}  E_{\vec{d_I}}(\vec{n_I})\prod_{i\in I} n_i^\varepsilon \prod_{i\notin I} n_i  + \left(\frac{1}{n_1}+\dots+\frac{1}{n_k}\right)\prod_{i=1}^k n_i
\end{equation}

\subsection*{Our results} 
We first prove a more general version of Theorem \ref{Fox} for semi-algebraic graphs, which shows that, if \emph{both} $P$ and $Q$ belong to irreducible varieties of dimensions $(e_1, e_2)$ inside of $\R^{d_1}$ and $\R^{d_2}$, then we may replace both dimensions $(d_1, d_2)$ by $(e_1, e_2)$ in the upper bound. 
\begin{thm}\label{Fox-extension}
Given a semi-algebraic bipartite graph $G=(P, Q,\mathcal{E})$ where $P$ is a set of $m$ points in an irreducible variety of dimension $e_1$ and complexity at most $D$ in $\R^{d_1}$ and $Q$ is a set of $n$ points in an irreducible variety of dimension $e_2$ and complexity at most $D$ in $\R^{d_2}$. If $G$ has description complexity $t$ and contains no $K_{u,u}$, then $$|\mathcal{E}| =O_{e_1,e_2,D,t,u,\varepsilon}(m^{\frac{e_1e_2-e_2}{e_1e_2-1}+\varepsilon}n^{\frac{e_1e_2-e_1}{e_1e_2-1}+\varepsilon}+m+n).$$
\end{thm}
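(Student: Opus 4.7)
My plan is to adapt the polynomial partitioning proof of Theorem \ref{Fox} so that the partitioning is carried out \emph{simultaneously} on both underlying varieties rather than only on the variety containing $P$. In the original argument of \cite{Fox}, partitioning $V_1\supset P$ is what allows the exponent of $d_1$ to drop to $e_1$; performing the analogous partition on $V_2\supset Q$ should drop $d_2$ to $e_2$ as well, giving the symmetric bound.

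\emph{Step 1 (simultaneous polynomial partitioning on varieties).} Choose parameters $r_1,r_2\geq 1$ to be tuned. Using the polynomial partition theorem on a variety (Guth / Solymosi-Tao) applied to $(V_1, P)$, produce a polynomial $g_1$ of degree $O(r_1^{1/e_1})$ so that $V_1\setminus Z(g_1)$ is a union of $O(r_1)$ open cells $\{\Omega_1^{(j)}\}$, each containing at most $m/r_1$ points of $P$. Perform the analogous construction on $(V_2, Q)$ to get a polynomial $g_2$ of degree $O(r_2^{1/e_2})$ and cells $\{\Omega_2^{(k)}\}$ of $V_2\setminus Z(g_2)$. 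Partition $\mathcal{E}$ into four pieces according to whether $p\in Z(g_1)$ and whether $q\in Z(g_2)$.

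\emph{Step 2 (bounding cell-cell edges).} For each active pair $(\Omega_1^{(j)},\Omega_2^{(k)})$ the induced bipartite semi-algebraic subgraph remains $K_{u,u}$-free, so K\H{o}v\'ari--S\'os--Tur\'an gives at most $O_u\bigl((m/r_1)(n/r_2)^{1-1/u}+n/r_2\bigr)$ edges in it. The number of active pairs is controlled by observing that for each $q\in Q$ the neighborhood $N_{V_1}(q):=\{p\in V_1:(p,q)\in\mathcal{E}\}$ is semi-algebraic with bounded description complexity, and (after separating out the $q$'s whose neighborhoods have full dimension via a dyadic-size/$K_{u,u}$-freeness pigeonhole) can be taken to have dimension $\leq e_1-1$, so it meets only $O(r_1^{(e_1-1)/e_1})$ cells of $V_1$. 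Combining this with the symmetric statement for the neighborhoods $N_{V_2}(p)$ bounds the total number of active pairs and hence the cell-cell contribution.

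\emph{Step 3 (recursion on boundary pieces).} The three remaining classes involve either $p\in Z(g_1)\cap V_1$, or $q\in Z(g_2)\cap V_2$, or both. Each set $Z(g_i)\cap V_i$ is an algebraic subset of $V_i$, decomposable into $O(1)$ irreducible components of dimensions $\leq e_i-1$ and bounded complexity. Apply the theorem inductively to each such component; the induction is on $e_1+e_2$, with base case $e_1=1$ or $e_2=1$ handled directly since in that case the bound collapses to the trivial $O(m+n)$ plus K\H{o}v\'ari--S\'os--Tur\'an on an algebraic curve. Balancing $r_1, r_2$ between the cell-cell bound and the cost of the recursion yields the exponents $\tfrac{e_1e_2-e_2}{e_1e_2-1}+\varepsilon$ and $\tfrac{e_1e_2-e_1}{e_1e_2-1}+\varepsilon$; the $\varepsilon$ losses accumulate through the $O(e_1+e_2)$ levels of recursion.

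The chief obstacle will be Step 2 when $N_{V_1}(q)$ (or its counterpart on the other side) has full dimension $e_1$, which breaks the codimension-one intersection estimate. Handling this requires the dyadic-size pigeonhole used in \cite{Fox} adapted to the two-sided setting, together with bookkeeping to ensure the two partition parameters $r_1, r_2$ can be tuned consistently at each level of the recursion so that neither side of the exponent is lost.
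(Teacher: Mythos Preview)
Your proposal has a genuine gap in Step~2, and the overall strategy of \emph{two-sided} partitioning is not how $e_2$ enters the bound.

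First, the use of K\H{o}v\'ari--S\'os--Tur\'an on each cell pair gives an exponent $1-1/u$, not anything involving $e_1,e_2$; no choice of $r_1,r_2$ will balance this to the claimed $\tfrac{e_1e_2-e_2}{e_1e_2-1}$. Second, your control on ``active pairs'' via $\dim N_{V_1}(q)\le e_1-1$ fails in general: the relation is semi-algebraic, defined by inequalities, so $N_{V_1}(q)$ is typically an open subset of $V_1$ of full dimension $e_1$. In the Fox et~al.\ argument this is never an issue because one distinguishes cells that $\gamma_q$ \emph{crosses} (the boundary $\partial\gamma_q\cap V_1$ has dimension $\le e_1-1$, so $O(r^{(e_1-1)/e_1})$ cells are crossed) from cells that $\gamma_q$ \emph{fully contains} (handled directly by $K_{u,u}$-freeness, not by any dimension drop). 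Your ``dyadic-size/$K_{u,u}$-freeness pigeonhole'' does not correspond to any mechanism that forces $N_{V_1}(q)$ itself to have lower dimension. Finally, even if you fixed Step~2, naive double partitioning does not close the induction: summing the inductive bound over all $r_1r_2$ cell pairs produces a factor $r_1^{1-\alpha}r_2^{1-\beta}>1$.

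The paper's actual argument is both simpler and asymmetric. The only new observation is that because $Q$ lies on an $e_2$-dimensional variety of bounded complexity, the Basu--Pollack--Roy bound on realizable sign patterns (Corollary~\ref{Basu-cor}) gives $\pi_{\mathcal F_1}(z)=O(z^{e_2})$ rather than $O(z^{d_2})$. Feeding this into the packing lemma (Lemma~\ref{lem packing}) yields the initial bound $|\mathcal E|\lesssim mn^{1-1/e_2}+n$, i.e.\ Lemma~\ref{cross}. From there one runs the \emph{one-sided} polynomial partitioning on $V_1$ exactly as in Theorem~\ref{Fox}, inducting on $e_1$; every occurrence of $d_2$ is already replaced by $e_2$ via the improved initial bound, and no partitioning of $Q$ is needed.
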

We then extend this result to $k$-uniform semi-algebraic hypergraphs for any $k\geq 3$.  

\begin{thm}[Main Theorem]\label{main thm}
Given a $k$-uniform $k$-partite hypergraph $H=(P_1,\dots, P_k,\mathcal{E})$ with description complexity $t$ which avoids $K_{u,\dots,u}$, then 
\begin{equation}\label{main ineq}
|\mathcal{E}(H)|=O_{t,k,u, \vec{d}, \varepsilon}\left(F^\varepsilon_{\vec{d}}(\vec{n})\right).
\end{equation}
Moreover, if for each $i\leq k$, $P_i$ belongs to an irreducible variety of degree $D$ and dimension $e_i\leq d_i$, then 
$|\mathcal{E}(H)|=O_{t,k,u, \vec{d}, D, \varepsilon}\left(F^\varepsilon_{\vec{e}}(\vec{n})\right)$
where $\vec{e}=(e_1,\dots, e_{k})$.
\end{thm}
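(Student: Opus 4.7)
The plan is to proceed by nested induction combined with polynomial partitioning on varieties. The outer induction is on the uniformity $k$, with base case $k=2$ furnished directly by Theorem \ref{Fox-extension}. For fixed $k \geq 3$, I would run an inner induction on the dimension $e_k$ of the ambient variety $V_k \supset P_k$: the base case $e_k = 0$ collapses $P_k$ to a single point and reduces the count to a $(k-1)$-uniform hypergraph, which the outer induction handles. By relabeling coordinates I can always arrange for polynomial partitioning to act on the $k$-th coordinate.

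The core step is as follows. For a parameter $r$ to be chosen, apply the polynomial partitioning theorem for varieties to $V_k$ to produce $f \in \R[x_1,\dots,x_{d_k}]$ of degree $O(r)$, not identically zero on $V_k$, such that $V_k \setminus Z(f)$ decomposes into $O(r^{e_k})$ open cells, each meeting at most $O(n_k/r^{e_k})$ points of $P_k$. Split $\mathcal{E}(H)$ into Type~I hyperedges, where $p_k$ lies in the interior of some cell, and Type~Z hyperedges, where $p_k \in V_k \cap Z(f)$. For Type~Z, the set $V_k \cap Z(f)$ has dimension at most $e_k - 1$ and, by Bezout, decomposes into $O_D(r)$ irreducible components; restricting the hypergraph to each and invoking the inner induction on $e_k$ gives a combined bound of $O(r) \cdot F^\varepsilon_{(e_1,\dots,e_{k-1},e_k-1)}(\vec n)$. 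For Type~I, each cell $\Omega_\alpha$ supports a $K_{u,\dots,u}$-free sub-hypergraph on $(P_1,\dots,P_{k-1}, P_k \cap \Omega_\alpha)$ whose last part has size $O(n_k/r^{e_k})$; applying the theorem recursively (induction on $\prod_{i=1}^k n_i$) to each cell and using the scaling of $E_{\vec e}$ in its $k$-th argument, the sum over the $O(r^{e_k})$ cells produces roughly $r^{e_k(1-\alpha_k)} \cdot E_{\vec e}(\vec n) \prod_{i=1}^k n_i^\varepsilon$, where $\alpha_k$ is the exponent of $n_k$ in $E_{\vec e}$; the proper-subset terms of $F^\varepsilon$ generate parallel contributions. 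Choosing $r$ to be a large constant depending on $\varepsilon$ balances Type~I against Type~Z and yields the target bound $C \cdot F^\varepsilon_{\vec e}(\vec n)$.

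The multiple summands of $F^\varepsilon_{\vec e}$ indexed by proper subsets $I \subsetneq [k]$ should arise naturally in degenerate parameter regimes: when some $n_i$ is much smaller than its companions, the sharpest bound is obtained by fixing the $n_i$ points of $P_i$ one at a time and applying the $(k-1)$-uniform bound on the remaining coordinates, yielding exactly the $I = [k] \setminus \{i\}$ term; iteration produces the smaller-$|I|$ terms, and the final $(\sum_{i=1}^k 1/n_i) \prod_{i=1}^k n_i$ piece absorbs the trivial base cases. The main obstacle I expect is the multi-index bookkeeping: each polynomial-partitioning step generates a sum indexed by coordinate subsets, and preserving the exact shape of $F^\varepsilon$ through the recursion --- in particular, preventing the $\varepsilon$-exponents from compounding unfavorably across the iterations needed to absorb constants --- requires a careful analysis analogous to that of \cite{Fox}. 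Secondary technicalities include tracking the irreducibility and bounded degree of $V_k$ under restriction to $V_k \cap Z(f)$ (by decomposing into irreducible components and applying the bound to each) and verifying that the description complexity $t$ of the hypergraph is stable under all the restrictions used.
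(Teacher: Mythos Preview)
Your partitioning scheme does not close the induction. When you partition only $P_k$ into $O(r^{e_k})$ cells, each meeting $O(n_k/r^{e_k})$ points, the Type~I contribution summed over cells scales, as you correctly compute, like $r^{e_k(1-\alpha_k)}\cdot E_{\vec e}(\vec n)\prod_i n_i^\varepsilon$. But $\alpha_k<1$, so the exponent $e_k(1-\alpha_k)$ is \emph{positive}: this factor grows with $r$, and no choice of $r$ makes it smaller than $1$. The $\varepsilon$ does not rescue you, since $1-\alpha_k$ is a fixed positive quantity independent of $\varepsilon$. There is also no ``crossing'' gain available on the other side: a point $p_k$ lies in exactly one cell, and if you instead try to view the tuples $(p_1,\dots,p_{k-1})$ as semi-algebraic sets in $\R^{d_k}$, the set of those that cross a given cell need not be a product set in $P_1\times\dots\times P_{k-1}$, so you lose the $k$-partite structure needed to recurse.

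The paper's key idea is to partition the \emph{other} $k-1$ parts simultaneously. Viewing hyperedges as incidences between the grid $P_1\times\dots\times P_{k-1}\subset\R^{d_1+\dots+d_{k-1}}$ and semi-algebraic sets $\gamma_{p_k}$ indexed by $P_k$, one partitions each $P_i$ ($i<k$) separately by a polynomial $f_i$ of degree $O(r)$ and uses the \emph{product} polynomial $h=f_1\cdots f_{k-1}$, so that cells are products of cells and the grid structure survives. In each product cell $n_i$ drops to $n_i/r^{e_i}$ for $i<k$, while each $\gamma_{p_k}$ crosses only $O(r^{e_1+\dots+e_{k-1}-1})$ cells, so on average $n_k$ drops to $n_k/r$. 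The scaling identity of Corollary~\ref{inflate by r property},
\[
r^{e_1+\dots+e_{k-1}}\,E_{\vec e}\Bigl(\tfrac{n_1}{r^{e_1}},\dots,\tfrac{n_{k-1}}{r^{e_{k-1}}},\tfrac{n_k}{r}\Bigr)=E_{\vec e}(\vec n),
\]
then leaves only a factor $r^{-\varepsilon}<1$ from the $\prod_i n_i^\varepsilon$ term, and the induction closes. You are also missing the preliminary bound (Proposition~\ref{prop-init-bound}), obtained via the packing Lemma~\ref{lem packing} applied to the bipartite graph $(P_1\times\dots\times P_{k-1},P_k)$; this is what actually handles the degenerate regimes and generates the proper-subset terms of $F^\varepsilon_{\vec e}$, and it is what justifies assuming that $E_{\vec e}(\vec n)\prod_i n_i^\varepsilon$ dominates before the partitioning begins.
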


\begin{remark}
\begin{itemize}
\item[(i)]\sloppy Without the $\varepsilon$ term, function $F$ has a nicer form $F_{\vec{d}}(\vec{n})=\sum_{\emptyset\neq I\subset[k]} E_{\vec{d_I}}(\vec{n_I})\prod_{i\notin I}n_i$. 
As mentioned in \cite{Fox}  the term $n_1^\varepsilon$ is not necessary when $k=d_1=d_2=2$, and we conjecture that this artifact of the proof can be removed in general.

\item[(ii)] When $k=2$, this theorem implies  $|\mathcal{E}(H)|\lesssim F^{\varepsilon}_{d_1, d_2}(m,n) = m^{\frac{d_1d_2-d_2}{d_1d_2-1}+\varepsilon}n^{\frac{d_1d_2-d_1}{d_1d_2-1}+\varepsilon}+m+n$. It is slightly weaker yet essentially the same as the bound in Theorem \ref{Fox}. 
Indeed, as we shall see in remark \ref{assumption}, the term $m+n$ dominates unless $n^{1/d_2}\leq m\leq n^{d_1}$. Hence $$m^{\frac{d_1d_2-d_2}{d_1d_2-1}+\varepsilon}n^{\frac{d_1d_2-d_1}{d_1d_2-1}+\varepsilon}+m+n \lesssim m^{\frac{d_1d_2-d_2}{d_1d_2-1}+\varepsilon'}n^{\frac{d_1d_2-d_1}{d_1d_2-1}} + m +n,$$ where $\varepsilon'=(d_2+1)\varepsilon$. In general, we can prove a stronger result where $\varepsilon$ appears only once in each term of $F^\varepsilon_{\vec{d}}(\vec{n})$.

\item[(iii)] When $d_1=d_2=\dots=d$, we have
$$|\mathcal{E}(H)|=O_{t,k,u,\vec{d},\varepsilon}\left(\sum_{j=2}^k \sum_{I\subset[k];|I|=j} \left(\prod_{i\notin I} n_i\right)\left(\prod_{i\in I}n_i\right)^{1-\frac{1}{(j-1)d+1}+\varepsilon}+\left(\sum_{i=1}^k\frac{1}{n_i}\right)\prod_{i=1}^k n_i\right).
$$
When $k=3$ we get the formula mentioned in the abstract.
Assume furthermore $n_1=\dots=n_k=n$ the bound becomes $n^{k-\frac{k}{(k-1)d+1}+\varepsilon}$ which is smaller than $n^{k-\frac{1}{u^{k-1}}}$ (the bound in Theorem \ref{Erdos}) when $d<\frac{ku^{k-1}-1}{k-1}$. 
\item[(iv)] If, on the other hand, if $u^{k-1}<d_i$ for some $i$, say $u^{k-1}<d_k$, we can use Theorem \ref{Erdos} instead of Proposition \ref{prop-init-bound} in the proof to derive the bound $F^\varepsilon_{d_1,\dots, d_{k-1}, u^{k-1}}(\vec{n})$ where we replace $d_k$ by $u^{k-1}$.
\end{itemize}
\end{remark}

\subsection*{Applications} Our main result, Theorem \ref{main thm}, implies nontrivial  bounds for many geometric problems.   In section \ref{section:application}, we present several applications including a variant of the unit area problem, the unit minor problem, and intersection hypergraphs.

First we find an upper bound $O_\varepsilon(n^{12/5+\varepsilon})$ for the number of triangles with area very close to 1, say between 0.9 and 1.1, formed by $n$ points in the plane, assuming for some fixed $u>0$ there does not exist $3u$  points $a_i, b_i, c_i$, $i\in[u]$ among those given points such that the triangles formed by $(a_i, b_j, c_k)$ have area between 0.9 and 1.1 for any $i,j,k\in[u]$. 

The unit minor problem asks for the largest number of unit $d\times d$ minors in a $d\times n$ matrix with no repeated columns. This problem was considered in \cite{minor in matrix} but only for the case the matrix is totally positive, which is a much stronger assumption. In \ref{sec:unit minor} 
we prove an upper bound $O_{d,\varepsilon} (n^{d-\frac{d}{d^2-d+1}+\varepsilon})$ for the number of unit minors for any matrix with no repeated columns. As a corollary, the maximum number of unit volume $d$-simplices formed by $n$ points in $\R^d$ is $O_{d,\varepsilon}(n^{d+1-\frac{d}{d^2-d+1}+\varepsilon})$. 

The last application is about intersection hypergraphs.
Given a set $S$ of geometric objects, their intersection graph $H(S)$ is defined as a graph on the vertex set $S$, in which two vertices are joined by an edge if and only if the corresponding elements of $S$ have a point in common.  Fox and Pach proved that if $H(S)$ is $K_{u,u}$-free for some $u>0$, then $H(S)$ has $O(n)$ edges when $S$ is a set of $n$ line segments \cite{Fox-intersection graph} or arbitrary continuous arcs \cite{Fox-intersection graph2, Fox-intersection graph3} in $\R^2$ , 
Mustafa and Pach \cite{intersection-hypergraph} gave a hypergraph version of this, but only for simplices. Given a set $S$ of geometric objects (usually of dimension $d-1$) in $\R^d$, their intersection hypergraph $H(S)$ is defined as a $d$-uniform hypergraph on the vertex set of $S$, in which $d$ vertices form a hyperedge if and only if the corresponding sets in $S$ have a point in common. Mustafa and Pach \cite{intersection-hypergraph} proved that  if $H(S)$ is $K_{u,\dots, u}$-free then $H(S)$ has $O_{d,\epsilon}(|S|^{d-1+\epsilon})$ hyperedges given $S$ is a set of $(d-1)$-dim simplices in $\R^d$. 
In this paper, we found a nontrivial upper bound for many other types of geometric objects such as spheres. 

\subsection*{Organization} In section \ref{sec:preliminary}, we introduce several useful tools such as the Milnor-Thom's theorem, the polynomial partitioning method and a packing-type result from set system theory. We then prove Theorem \ref{Fox-extension} in section \ref{sec:general Fox} and our main theorem in section \ref{sec:general_k}. Section \ref{section:application} is devoted to applications. We end with several open problems in section \ref{section:discussion}.

\subsection*{Acknowledgements} The author is immensely grateful to Larry Guth for suggesting this problem and for his  help throughout the project. She thanks Ben Yang for providing the short and beautiful proof of lemma \ref{no K222}, as well as Josh Zahl and Andrew Suk for helpful conversations. She also thanks Ethan Jaffe, Malcah Effron and Jake Wellens for proofreading this paper.
Finally, she would like to thank the referees for many helpful suggestions.
\section{Preliminary}\label{sec:preliminary}
\subsection{Milnor-Thom type results}

 Milnor-Thom's theorem \cite{Milnor, Thom} states that the zero set  of a degree $D$ polynomial $f$, denoted by $Z(f)$, divides $\R^d$  into at most $(50D)^d$ connected components (i.e. $\R^d\setminus Z(f)$ has at most $(50D)^d$ connected components). Basu, Pollack and Roy extended this result to the case when we restrict our attention to a variety inside $\R^d$. 
 
 A \emph{sign pattern} for a set of $s$ $d$-variate polynomials $\{f_1,\dots, f_s\}$ is a vector $\sigma\in \{-1,0,+1\}^s$. A sign pattern $\sigma$ is \emph{realizable} over a variety $V\subset\R^d$ if there is some $x\in V$ such that (sign$(f_1(x))$, sign$(f_2(x)),\dots,$sign$(f_s(x)))=\sigma$. The set of all such $x$ is the realization space of $\sigma$ in $V$, denoted by $\Omega_\sigma$.
\begin{thm}[Basu, Pollack and Roy, 1996 \cite{Basu-Pollack-Roy}]\label{Basu-Pollack-Roy}
Given positive integers $d, e, M, t, l$, let $V$ be an $e$-dimensional real algebraic set in $\R^d$ of complexity\footnote{A variety has \emph{complexity} at most $M$ if it can be realized as the intersection of zero-sets of at most $M$ polynomials, each of degree at most $M$.} at most $M$, 
and let $f_1,\dots, f_s$ be $d$-variate real polynomials of degree at most $t$. Then the total number of connected components of $\Omega_\sigma$ for all realizable sign patterns $\sigma$ of $\{f_1,\dots, f_s\}$ is at most $O_{M,d,e}((ts)^e)$. 
\end{thm}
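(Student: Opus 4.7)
The plan is to adapt Milnor's classical proof for the Euclidean case ($V=\R^d$) to the subvariety setting via Morse theory combined with a Bezout-type count. First I would compactify by intersecting with a large ball: replace $V$ by $V \cap \{R^2-|x|^2 \geq 0\}$ for $R$ large, and include $f_{s+1}(x) := R^2-|x|^2$ as one more sign-controlling polynomial of degree $2$. For $R$ sufficiently large the unbounded part contributes a comparable count by a routine sphere-at-infinity argument, so one loses at most a multiplicative constant.

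Next I would pass to a generic infinitesimal perturbation $f_i \mapsto f_i - \delta_i$ chosen so that, for every $I \subseteq \{1,\dots,s+1\}$ with $|I|\leq e$, the set $W_I := V \cap \bigcap_{i\in I}\{f_i=0\}$ is a smooth subvariety of $V$ of codimension $|I|$, and is empty for $|I|>e$. Such a perturbation exists by a Sard-type argument applied to the defining equations of $V$ together with $f_1,\dots,f_{s+1}$, and after the perturbation the total count of connected components of realization spaces can only go up.

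The counting step uses a generic linear function $\ell$ on $\R^d$. To each connected component $C$ of each $\Omega_\sigma$ I associate the triple $(p, I, \text{local region of } V \setminus \bigcup_i\{f_i=0\} \text{ at } p)$, where $p \in \overline{C}$ minimizes $\ell|_{\overline{C}}$ and $I = \{i : f_i(p)=0\}$. Then $p$ is a critical point of $\ell$ restricted to the smooth stratum $W_I$, and $|I|\leq e$ by transversality. This assignment is injective up to a factor depending only on $d,e,M$ (bounding how many local regions can share a boundary point). For each fixed $I$, a Bezout-type bound on the Lagrange-multiplier system for $\ell|_{W_I}$ gives $O_{M,d,e}(t^{|I|})$ critical points. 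Summing over $I$,
\begin{equation*}
\sum_{j=0}^{e} \binom{s+1}{j}\, O_{M,d,e}(t^j) \,=\, O_{M,d,e}\bigl((st)^e\bigr),
\end{equation*}
as claimed.

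The hard part is the transversality/perturbation step: ensuring that a single infinitesimal perturbation simultaneously achieves smoothness and the correct codimension for all $\sim 2^s$ strata $W_I$ on $V$, while carefully preserving (or growing) the component count at every step. The cleanest way is to work with Puiseux series in a single infinitesimal $\delta>0$ and analyze semi-algebraic limits as $\delta\to 0^+$, as in the Basu-Pollack-Roy framework. The other delicate point is making Bezout quantitative in the complexity $M$ of $V$, since $V$ is presented as an intersection of $M$ hypersurfaces (not as a complete intersection) and one must bound the degree of each $W_I$ by $O_M(t^{|I|})$ before invoking Bezout on the Lagrange system; this is standard but is where the implicit constants' dependence on $M,d,e$ is absorbed.
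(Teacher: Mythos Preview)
The paper does not prove this theorem; it is quoted verbatim from Basu, Pollack and Roy (1996) and used as a black box (see the citation \cite{Basu-Pollack-Roy} immediately after the theorem label). So there is no ``paper's own proof'' against which to compare your attempt.

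For what it is worth, your outline is in the spirit of the original Basu--Pollack--Roy argument: compactify, perturb to put the arrangement in general position relative to $V$, stratify by the vanishing set $I$, and count critical points of a generic linear functional on each stratum $W_I$ via a Bezout-type estimate, then sum $\sum_{j\le e}\binom{s}{j}t^j = O((st)^e)$. Two places in your sketch would need real work to become a proof. First, your injectivity claim for the map $C\mapsto (p,I,\text{local region})$ ``up to a factor depending only on $d,e,M$'' is not obviously right as stated: the number of local regions of $V\setminus\bigcup_i\{f_i=0\}$ meeting at a point $p$ can depend on $|I|$ and hence on $s$, so one must argue more carefully (in the original argument one simply shows each component of each $\Omega_\sigma$ contributes at least one critical point on its closure, and then bounds the total number of critical points). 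Second, the assertion that the Lagrange system on $W_I$ has $O_{M,d,e}(t^{|I|})$ solutions is exactly the heart of the matter when $V$ is not a complete intersection, and your final paragraph correctly flags this; the original paper handles it via an explicit construction and a refined degree bound rather than a direct appeal to Bezout.
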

Milnor-Thom's theorem follows from this result by taking $s=1, V=\R^d$ and noting that $\R^d\setminus Z(f)$ is the union of two realizable spaces $\{f>0\}$ and $\{f<0\}$. This result  implies if we restrict to a variety $V$ with dimension $e$ and bounded complexity in $\R^d$, then the number of connected components that $f_1,\dots, f_s$ partition $V$ grows with $e$ instead of $d$. Since each realizable sign pattern has at least one connected component, we get a bound on the number of  {realizable sign patterns}. 
\begin{cor}\label{Basu-cor}
Under the same assumption as above, the number of realizable sign patterns of $(f_1,\dots, f_s)$ in $V$ is at most $O_{M,d,e}((ts)^e)$.
\end{cor}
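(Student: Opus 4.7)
The proof proposal is essentially a one-line deduction from Theorem \ref{Basu-Pollack-Roy}, so the plan is very short. My strategy is to observe that the number of realizable sign patterns is bounded by the total number of connected components contributed by their realization spaces, and then invoke the theorem.

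First I would unpack the definitions. A sign pattern $\sigma \in \{-1,0,+1\}^s$ is called realizable over $V$ precisely when its realization space $\Omega_\sigma \subset V$ is nonempty. Any nonempty set has at least one connected component, so each realizable $\sigma$ contributes a count of at least $1$ to the quantity $\sum_\sigma (\text{number of connected components of } \Omega_\sigma)$, where the sum is taken over realizable sign patterns.

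Next I would apply Theorem \ref{Basu-Pollack-Roy} directly. That theorem bounds the total count $\sum_\sigma (\text{number of connected components of } \Omega_\sigma)$ by $O_{M,d,e}((ts)^e)$. Since each realizable $\sigma$ is counted at least once in this sum, the number of realizable sign patterns is at most the same quantity $O_{M,d,e}((ts)^e)$, which is exactly the claim.

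The main obstacle is essentially nonexistent here: this corollary is just a repackaging of Theorem \ref{Basu-Pollack-Roy} using the trivial inequality $1 \leq (\text{number of connected components of } \Omega_\sigma)$ for each realizable $\sigma$. The only subtlety worth mentioning explicitly is that realization spaces of distinct sign patterns are pairwise disjoint, so there is no double counting in the sum; this follows immediately from the fact that $\sigma$ is determined pointwise by the signs of the $f_i$.
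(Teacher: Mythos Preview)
Your proposal is correct and matches the paper's own argument exactly: the paper simply notes that each realizable sign pattern has at least one connected component, so the number of realizable sign patterns is bounded by the total count from Theorem~\ref{Basu-Pollack-Roy}. Your added remark about disjointness of the $\Omega_\sigma$ is a harmless clarification but not needed, since the theorem already sums over all realizable $\sigma$ separately.
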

Furthermore, a similar result holds if we replace $V$ by $V\setminus W$ for some variety $W$ with bounded complexity.
\begin{thm}[Theorem A.2 in \cite{Tao}] 
\label{Tao}
Given positive integers $d, e, M, t$ such that $e\leq d$, let $V$ and $W$ be a real algebraic sets in $\R^d$ of complexity at most $M$ such that $V$ is $e$-dimensional. Then for any polynomial $P:\R^d\to \R$ of degree $t \geq 1$, the set $\{x\in V\setminus W: P(x)\neq 0\}$ has $O_{M,d,e}(t^e)$ connected components.
\end{thm}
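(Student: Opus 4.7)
The plan is to reduce the statement to Theorem~\ref{Basu-Pollack-Roy} (Basu--Pollack--Roy) by encoding the removal of $W$ as a strict sign condition on a single auxiliary polynomial of bounded degree.

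Since $W$ has complexity at most $M$, write $W = Z(q_1) \cap \cdots \cap Z(q_s)$ with $s \leq M$ and each $\deg q_i \leq M$. Then $Q := q_1^2 + \cdots + q_s^2$ is a nonnegative polynomial of degree at most $2M$ satisfying $W = \{Q = 0\}$, so
\[
\{x \in V \setminus W : P(x) \neq 0\} = \{x \in V : Q(x) > 0,\; P(x) \neq 0\} = \Omega_{(+,+)} \sqcup \Omega_{(+,-)},
\]
where $\Omega_\sigma$ denotes the realization space in $V$ of sign pattern $\sigma$ for the ordered pair $(Q, P)$. Since these two pieces differ in the sign of $P$ they are disjoint, and the number of connected components of their union is the sum of the numbers of connected components of each.

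Next, apply Theorem~\ref{Basu-Pollack-Roy} to the two polynomials $(Q, P)$ on the $e$-dimensional variety $V$ of complexity at most $M$. Both have degree at most $\max(2M, t)$, so the total number of connected components summed over all realizable sign patterns of $(Q, P)$ in $V$ is at most
\[
O_{M,d,e}\!\bigl( (2\max(2M, t))^e \bigr) = O_{M,d,e}(t^e),
\]
since $M$ is absorbed into the implied constant and $t \geq 1$. This in particular bounds the number of components of $\Omega_{(+,+)} \sqcup \Omega_{(+,-)}$, which is what we want.

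The argument is essentially a direct transfer from Theorem~\ref{Basu-Pollack-Roy}; the main conceptual step is the sum-of-squares trick, which collapses the defining polynomials of $W$ into a single polynomial so that the complement $V \setminus W$ fits into the sign-pattern framework. There is no real technical obstacle once this observation is made, the only bookkeeping being to verify that the complexity parameters of $W$ enter only through the implicit constant rather than inflating the $t^e$ factor.
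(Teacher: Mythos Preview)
The paper does not give its own proof of this statement; it is quoted as Theorem~A.2 from \cite{Tao} and used as a black box in the preliminary section. So there is no ``paper's proof'' to compare against.

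Your argument is correct. The sum-of-squares trick $Q=q_1^2+\cdots+q_s^2$ is exactly the right move: it turns the condition $x\notin W$ into the single strict sign condition $Q(x)>0$, after which the set in question is a union of two realization spaces for the pair $(Q,P)$ on $V$, and Theorem~\ref{Basu-Pollack-Roy} applies directly. The degree bookkeeping is also right: with two polynomials of degree at most $\max(2M,t)$ the Basu--Pollack--Roy bound is $O_{M,d,e}((2\max(2M,t))^e)$, which is $O_{M,d,e}(t^e)$ since $t\geq 1$ and $M$ is absorbed into the constant. This is essentially the same reduction Solymosi and Tao use in the cited appendix, so your proof is both valid and in line with the original source.
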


\subsection{Polynomial partitioning}\label{sec:poly partitioning}
The polynomial partitioning method was first introduced by Guth and Katz in \cite{Larry} in 2010, and numerous modifications have appeared since then. In this paper we use the version proved in \cite{Fox}. 
Given a set $P$ of points in $\R^d$, we say a polynomial $f\in\R[x_1,\dots,x_d]$ is an \emph{$r$-partitioning for $P$} if the zero set of $f$ (denoted $Z(f)$) divides the space into open connected components, each of which contains at most $|P|/r$ points of $P$.

\begin{thm}[Theorem 4.2 in \cite{Fox}]\label{poly partitioning}
Let $P$ be a set of points in $\R^d$, and let $V\subset \R^d$ be an irreducible variety of degree $D$ and dimension $e$. Then for big enough $r$, there exists an $r-$partitioning polynomial $g$ for $P$ such that $g\notin I(V)$ and $\deg g\leq C_{part} \cdot r^{1/e}$ where $C_{part}$ depends only on $d$ and $D$. 
\end{thm}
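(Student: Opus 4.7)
The plan is to adapt the Guth--Katz iterated polynomial ham-sandwich argument, modified so that the bisecting polynomial at each stage is forbidden from vanishing identically on $V$. The degree savings come from working in the quotient ring $\R[x_1,\dots,x_d]/I(V)$: by the classical Hilbert polynomial estimate for an irreducible variety of dimension $e$ and degree $D$, its graded pieces have dimension $\Theta_{D,d,e}(t^e)$ rather than $\Theta(t^d)$.

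The core sub-step is a ham-sandwich statement compatible with the constraint $g\notin I(V)$: for any disjoint finite point sets $S_1,\dots,S_N\subset\R^d$, there exists a polynomial $g\notin I(V)$ of degree $O_{D,d,e}(N^{1/e})$ such that each open set $\{g>0\}$ and $\{g<0\}$ contains at most half of every $S_i$. To prove it, pick $t=CN^{1/e}$ with $C=C(D,d,e)$ large enough that $\dim\bigl(\R[x_1,\dots,x_d]_{\leq t}/(I(V))_{\leq t}\bigr)>N$, which is possible by the Hilbert bound just cited. Consider the odd map from the unit sphere in this quotient to $\R^N$ sending $[g]\mapsto\bigl(|S_i\cap\{g>0\}|-|S_i\cap\{g<0\}|\bigr)_{i=1}^N$; by the Borsuk--Ulam theorem it has a zero, and any lift of that zero to $\R[x_1,\dots,x_d]$ is the desired bisecting polynomial, since such a lift does not lie in $I(V)$ by construction.

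The $r$-partitioning polynomial is then built by iterative halving. Start with the single ambient cell containing all of $P$. At step $k=1,\dots,\lceil\log_2 r\rceil$, apply the ham-sandwich sub-step to the (at most $2^{k-1}$) current cells, viewed as point sets, to obtain $f_k\notin I(V)$ of degree $O_{D,d,e}(2^{(k-1)/e})$; the refined cells are the connected components of $\R^d\setminus Z(f_1\cdots f_k)$, each of which holds at most $|P|/2^k$ points. Take $g=f_1\cdots f_{\lceil\log_2 r\rceil}$: summing the resulting geometric series of degrees gives $\deg g=O_{D,d,e}(r^{1/e})$, and because $V$ is irreducible the ideal $I(V)$ is prime, so $g\notin I(V)$ follows from $f_k\notin I(V)$ for each~$k$.

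The main obstacle is securing the Hilbert function lower bound $\dim\bigl(\R[x_1,\dots,x_d]_{\leq t}/(I(V))_{\leq t}\bigr)\geq c_{D,d,e}\,t^e$ uniformly over irreducible real varieties $V$ of dimension $e$ and degree at most $D$, since Borsuk--Ulam requires the quotient space to outstrip $N$ in dimension. This is standard in the complex setting via the Hilbert polynomial; over $\R$ one passes to the Zariski closure in $\C^d$, where real and complex dimensions coincide for an $\R$-irreducible variety, and uses that the real points impose independent linear conditions in a neighborhood of a smooth real point. A subsidiary concern is that the bisections happen by sign in $\R^d$, not merely on $V$, so the argument is cleanest (and is what the paper actually needs later) when $P\subset V$; otherwise one must also check that the Milnor--Thom count $O(r^{d/e})$ of ambient cells remains sufficient to absorb the iterated halving.
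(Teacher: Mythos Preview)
The paper does not supply its own proof of this statement: it is quoted verbatim as Theorem~4.2 of \cite{Fox} and used throughout as a black box. So there is no ``paper's own proof'' to compare against.

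That said, your sketch is essentially the standard argument, and it is the one that appears in \cite{Fox} (building on Guth--Katz and on earlier variety-adapted versions due to Zahl, Kaplan--Matou\v{s}ek--Sharir, and others). You have correctly identified all of the moving parts: the Hilbert-function lower bound $\dim\bigl(\R[x]_{\le t}/I(V)_{\le t}\bigr)\gtrsim_{D,d} t^e$ for $t$ large; Borsuk--Ulam applied to the unit sphere of that quotient; iterated halving with a geometric sum of degrees; and the use of primality of $I(V)$ to conclude that the product $g=\prod f_k$ stays outside $I(V)$.

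Two minor technical remarks. First, the map $[g]\mapsto\bigl(|S_i\cap\{g>0\}|-|S_i\cap\{g<0\}|\bigr)_i$ is not continuous on the sphere as written; the usual remedy is to thicken each point to a small ball (equivalently, apply Stone--Tukey to absolutely continuous measures) and pass to a limit, or to run the discrete Borsuk--Ulam argument more carefully. Second, your observation about $P\subset V$ is exactly right and worth emphasizing: evaluating at $p\in P$ descends to the quotient $\R[x]/I(V)$ only when $p\in V$, so the ham-sandwich step in the quotient is only well-posed under that hypothesis. The paper (and \cite{Fox}) only ever invokes the theorem with $P\subset V$, so this is harmless in context; for arbitrary $P\subset\R^d$ one would instead fall back on the ambient bound $\deg g=O(r^{1/d})\le O(r^{1/e})$ and separately arrange $g\notin I(V)$.
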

This theorem implies in $\R^d$, if we restrict our attention to points in an irreducible variety of small degree and dimension $e<d$, then we can perform a polynomial partitioning the same way as in $\R^{e}$.

\subsection{A result about set systems}\label{sec:VC-dim}

A \emph{set system} $\F$ over a ground set $P$ is just a collection of subsets of $P$ (here we allow $\F$ to contain repeated elements).
Given a bipartite graph $G=(P,Q,\mathcal{E})$, let $N(p):=\{q\in Q: (p,q)\in \mathcal{E}\}$ denote the neighbors of a vertex $p\in P$, and likewise for $q \in Q$. Then $\mathcal{F}_1:=\{N(q): q\in Q\}$, and $\mathcal{F}_2:=\{N(p):p\in P\}$ are two set systems with ground sets $P$ and $Q$ respectively. 
The \emph{primal shatter function} of a set system $(\mathcal{F},P)$ is defined as
$$\pi_\mathcal{F}(z)=\max_{P'\subset P, |P'|=z}|\{A\cap P': A\in\mathcal{F}\}|.$$

Given two sets $A$ and $B$, we say $A$ \emph{crosses} $B$ if $A\cap B\notin\{\emptyset, B\}$. The following lemma is essential to our proof.

\begin{lem}[Observation 2.6 in \cite{Fox}]\label{lem packing}
For the set systems $(\F_1, P)$ and $(\mathcal{F}_2,Q)$ defined from the graph $G=(P,Q)$, if $\pi_{\mathcal{F}_1}(z)\leq c z^{d}$ for all $z$, then for each $u$, there exists $u$ points $q_1,\dots, q_u\in Q$ such that at most $O_{u,d,t,c}(|P||Q|^{-1/d})$ sets from $\mathcal{F}_2$ cross $\{q_1,\dots, q_u\}$. 
\end{lem}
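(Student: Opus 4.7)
The plan is to exploit the duality $q_i \in N(p) \iff p \in N(q_i)$ to convert the crossing condition on $\F_2$ into a symmetric-difference condition on $\F_1$. A direct unpacking shows that $N(p)$ crosses $\{q_1,\dots,q_u\}$ exactly when $p$ lies in $\bigcup_{i<j}(N(q_i)\triangle N(q_j))$, so the number of sets $N(p)\in\F_2$ that cross $\{q_1,\dots,q_u\}$ is at most $\sum_{i<j}|N(q_i)\triangle N(q_j)|$. It therefore suffices to locate $u$ distinct points $q_1,\dots,q_u\in Q$ whose neighbourhoods form a tight cluster in the symmetric-difference metric on $\F_1$, namely with $|N(q_i)\triangle N(q_j)|=O_u(|P||Q|^{-1/d})$ for every pair.

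To produce such a cluster I would invoke Haussler's packing lemma applied to $(\F_1,P)$. Since $\pi_{\F_1}(z)\le cz^d$, Sauer--Shelah bounds the VC-dimension of $\F_1$ in terms of $c$ and $d$, and Haussler's theorem then yields that any $\delta$-separated subfamily of $\F_1$ (sets whose pairwise symmetric differences exceed $\delta|P|$) has size $O_{c,d}(\delta^{-d})$. Choose a maximal $\delta$-packing $\mathcal{P}\subset\F_1$; by maximality every $N(q)$ lies within symmetric-difference distance $\delta|P|$ of some center $A\in\mathcal{P}$. Thinking of the map $q\mapsto A(q)$, pigeonhole yields a single center $A$ and a subset $Q'\subset Q$ of size at least $|Q|/|\mathcal{P}|\gtrsim_{c,d}|Q|\delta^d$ such that $|N(q)\triangle A|\le \delta|P|$ for all $q\in Q'$.

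Setting $\delta$ of order $(u/|Q|)^{1/d}$ with a suitable constant forces $|Q'|\ge u$, so one may pick any $u$ distinct elements $q_1,\dots,q_u$ from $Q'$. The triangle inequality for symmetric difference gives $|N(q_i)\triangle N(q_j)|\le 2\delta|P|$ for all $i,j$, and plugging this into the bound from the first paragraph produces at most $\binom{u}{2}\cdot 2\delta|P|=O_{u,c,d}(|P||Q|^{-1/d})$ crossing sets, as required. The main technical point is invoking Haussler's packing lemma under a primal shatter-function hypothesis rather than a raw VC-dimension hypothesis (one either cites the version of the lemma stated directly in terms of $\pi_\F$, or passes through Sauer--Shelah to get a VC bound first). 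Small edge cases, such as $|Q|$ being smaller than the constant appearing in the choice of $\delta$, are absorbed into the implicit constant by picking the $q_i$ arbitrarily and using the trivial bound $|\F_2|\le|P|$.
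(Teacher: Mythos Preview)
Your argument is correct and follows essentially the same route as the paper's sketch: both translate ``$N(p)$ crosses $\{q_1,\dots,q_u\}$'' into ``$p\in\bigcup N(q_i)\setminus\bigcap N(q_i)$'' and then appeal to a Haussler-type packing bound for $\F_1$. The only difference is cosmetic. The paper invokes the $u$-wise packing lemma of Fox et al.\ (if every $u$-tuple $A_1,\dots,A_u\in\F_1$ has $|\bigcup A_i\setminus\bigcap A_i|\ge\delta$ then $|\F_1|\lesssim(|P|/\delta)^d$) and argues by contradiction; you instead use the classical pairwise Haussler lemma, build a maximal $\delta$-packing, pigeonhole $u$ points into one cluster, and control $|\bigcup N(q_i)\setminus\bigcap N(q_i)|$ by the sum $\sum_{i<j}|N(q_i)\triangle N(q_j)|$. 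Your version has the mild advantage of citing only the standard Haussler bound rather than its $u$-wise extension, at the cost of an extra $\binom{u}{2}$ factor that is absorbed into the constant anyway.
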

In the paper we use this result as a black box. 
For readers who are interested in some intuition, it follows from a packing-type result in $VC$ dimension theory. The  \emph{Vapnik-Chervonenkis (VC) dimension} of a set system $\mathcal{F}$ is the largest $D$ such that $\pi_F(D)=2^D$. It is easy to see if $\pi_\mathcal{F}(z)\leq cz^d$ for some fixed $c,d>0$ then the VC dimension of $\mathcal{F}$ does not excess $c'd\log d$ for some $c'>0$.  Intuitively, if a set system $F$ has a bounded VC dimension,  and its elements are well separated in the symmetric distance, then $F$ cannot have too many elements; it is analogous to how we pack spheres in Euclidean spaces. To be precise, 
lemma 2.5  in \cite{Fox} says that if a set system $\mathcal{F}$  on a ground set of $m$ elements satisfies $\pi_\mathcal{F}(z)\leq cz^d$ for all $z$ and $|(A_1\cup\dots\cup A_u)\setminus (A_1\cap \dots\cap A_u)|\geq \delta$ for all choices of $A_1,\dots, A_u\in \F$ and some fixed $u,\delta$, then $|\mathcal{F}|\leq C_{pack}  (m/\delta)^d$ for some constant $C_{pack}$ that depends on $c,d,u$. 
 In lemma \ref{lem packing}, assume  for each set $\{q_1,\dots, q_u\}$ in $Q$ there are more than $\delta:=c_1|P||Q|^{-1/d}$ sets from $\mathbb{F}_2$ crossing it. This implies $|(N(q_1)\cup \dots\cup  N(q_u)\setminus (N(q_1)\cap\dots\cap N(q_u)|\geq \delta $ for all choices of $q_1,\dots, q_u\in Q$. In other words, $\mathcal{F}_1$ is $(u,\delta)$ separated. Applying the packing lemma to $\F_1$ leads to a contradiction for small enough $c_1$.
 $$|Q|=|\mathcal{F}_1|\leq c_1 \left(\frac{|P|}{\delta}\right)^d=c_1\left(\frac{|P|}{C_{pack}|P||Q|^{-1/d}}\right)^d=c_1C_{pack}^d|Q|<|Q|$$

\subsection{Some properties of functions $E_{\vec{d}}$ and $F^\varepsilon_{\vec{d}}$}\label{sec:property_EF} 
In this subsection, we collect some useful properties of $E_{\vec{d}}$ and $F^\varepsilon_{\vec{d}}$. All the proofs are quite straightforward and can be found in appendix \ref{appendix: property E}.
Recall $E_{\vec{d}}(\vec{n})=\prod_{i=1}^k n_i^{\alpha_i}$ where  $\alpha_i=1-\frac{1/(d_i-1)}{k-1+\sum_l 1/(d_l-1)}$.

\begin{lem}\label{proof-matrix-equ} 
For each $i\in[k]$ we have
$\alpha_i= \sum_{j\neq i} d_j(1-\alpha_j)$. In other words, the exponents $\{\alpha_i\}$ satisfy a nice system of equations:
\begin{equation}\label{matrix}
\begin{pmatrix}
1 & d_2 & \dots & d_k\\
d_1 & 1 &\dots & d_k\\
\vdots & \vdots & \ddots &\vdots\\
d_1 & d_2 & \dots & 1
\end{pmatrix}
\begin{pmatrix}
\alpha_1\\
\alpha_2\\
\vdots\\
\alpha_k
\end{pmatrix}=
\begin{pmatrix}
\sum_{i=1}^k d_i- d_1\\
\sum_{i=1}^k d_i- d_2\\
\vdots\\
\sum_{i=1}^k d_i- d_k
\end{pmatrix}
\end{equation}

\end{lem}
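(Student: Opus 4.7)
The plan is to verify the identity by a direct algebraic manipulation, after which the matrix equation in \eqref{matrix} is just a compact repackaging of the $k$ scalar identities. Set $S := k-1 + \sum_{l=1}^k \tfrac{1}{d_l-1}$, the common denominator appearing in the definition of the $\alpha_i$. Then by construction
$$1-\alpha_i \;=\; \frac{1}{(d_i-1)\,S},$$
so on the right-hand side of the claimed identity we get
$$\sum_{j\neq i} d_j(1-\alpha_j) \;=\; \frac{1}{S}\sum_{j\neq i} \frac{d_j}{d_j-1}.$$

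The key (and essentially only) observation is the trivial rewriting $\frac{d_j}{d_j-1} = 1 + \frac{1}{d_j-1}$, which lets us split the sum into $(k-1)$ (one $1$ for each $j \neq i$) plus $\sum_{j\neq i}\tfrac{1}{d_j-1}$, and this is exactly $S - \tfrac{1}{d_i-1}$ by definition of $S$. Substituting back gives
$$\sum_{j\neq i} d_j(1-\alpha_j) \;=\; \frac{S - \tfrac{1}{d_i-1}}{S} \;=\; 1 - \frac{1/(d_i-1)}{S} \;=\; \alpha_i,$$
which is the desired scalar identity. Rearranging this as $\alpha_i + \sum_{j\neq i} d_j\alpha_j = \sum_{j\neq i} d_j$ for each $i$, i.e.\ $\alpha_i - d_i\alpha_i + \sum_j d_j\alpha_j = \sum_j d_j - d_i$, is precisely the $i$-th row of the matrix equation \eqref{matrix} once we recognize that the diagonal entry contributes $\alpha_i$ (not $d_i\alpha_i$) and the off-diagonal entries contribute $d_j\alpha_j$.

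There is really no obstacle here: the entire content is the bookkeeping identity $\tfrac{d_j}{d_j-1}=1+\tfrac{1}{d_j-1}$ together with the definition of $S$. The only thing worth double-checking is the case $d_i=1$, where $\tfrac{1}{d_i-1}$ is undefined; as the footnote after \eqref{defi_E} notes, one should first clear denominators by multiplying through by $\prod_l (d_l-1)$ so that the exponents $\alpha_i$ and the identity both remain valid as rational functions of the $d_i$, and then specialize. With that caveat the calculation above proves the lemma.
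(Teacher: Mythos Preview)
Your proof is correct and follows essentially the same route as the paper's: both compute $\sum_{j\neq i}d_j(1-\alpha_j)$ directly, use the identity $\tfrac{d_j}{d_j-1}=1+\tfrac{1}{d_j-1}$ to recognize the numerator as $S-\tfrac{1}{d_i-1}$, and then rearrange to obtain the matrix form. The only difference is cosmetic (you name the denominator $S$ and add a remark on the $d_i=1$ case), so there is nothing to add.
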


\begin{cor}\label{inflate by r property}
 For any $r>0$ and each $i\in[k]$ we have 
 $$r^{d_1+\dots+d_{k-1}}E_{\vec{d}}(\frac{n_1}{r^{d_1}},\dots,\frac{n_{k-1}}{r^{d_{k-1}}},\frac{n_k}{r})=E_{\vec{d}}(\vec{n}).$$

Similar equalities, in which we replace the special index $k$ by some other index, also hold.
\end{cor}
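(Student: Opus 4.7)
The plan is to reduce the claimed identity to the linear system already established in Lemma \ref{proof-matrix-equ}. Write $E_{\vec{d}}(\vec{n}) = \prod_{i=1}^k n_i^{\alpha_i}$ with $\alpha_i := 1-\frac{1/(d_i-1)}{k-1+\sum_l 1/(d_l-1)}$. Substituting the rescaled arguments $n_i/r^{d_i}$ (for $i<k$) and $n_k/r$ into the definition, the factors $\prod_{i=1}^k n_i^{\alpha_i}$ cancel from both sides of the proposed equality, so the identity collapses to a single scalar relation among the exponents of $r$, namely
$$(d_1+\cdots+d_{k-1}) - \sum_{i=1}^{k-1} d_i\,\alpha_i - \alpha_k = 0,$$
or equivalently $\alpha_k = \sum_{i=1}^{k-1} d_i(1-\alpha_i)$.

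I would then observe that this is exactly the $k$-th equation of the linear system in Lemma \ref{proof-matrix-equ}, i.e.\ the last row of the matrix identity (\ref{matrix}). The "similar equalities" asserted at the end of the corollary are handled in the same way: replacing the distinguished index $k$ by an arbitrary $i\in[k]$ (so that $n_i$ is scaled by $r$, every other $n_j$ by $r^{d_j}$, and one compensates by the factor $r^{\sum_{j\neq i} d_j}$) reduces, after the same cancellation, to the equation $\alpha_i = \sum_{j\neq i} d_j(1-\alpha_j)$, which is the $i$-th row of (\ref{matrix}).

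There is no real obstacle here, since the corollary is exactly the geometric content of Lemma \ref{proof-matrix-equ}: the exponents $\alpha_i$ have been calibrated precisely so that $E_{\vec{d}}$ is invariant under the indicated scalings. The only thing worth being slightly careful about is the bookkeeping of which $d_i$'s appear in the compensating power of $r$ (all $d_j$ with $j\neq i$, never $d_i$ itself), which matches the shape of the equation $\alpha_i = \sum_{j\neq i} d_j(1-\alpha_j)$ and explains why this particular scaling is singled out: it is the one that will arise in the partitioning step of the main theorem, where shrinking $n_i$ by the volume factor $r^{d_i}$ corresponds to restricting to a single cell of a degree-$r^{1/d_i}$ polynomial partition.
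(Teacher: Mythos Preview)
Your argument is correct and is essentially identical to the paper's own proof: expand $E_{\vec d}$ as $\prod_i n_i^{\alpha_i}$, collect the power of $r$, and invoke Lemma~\ref{proof-matrix-equ} to see that the exponent of $r$ vanishes. Your explicit treatment of the ``similar equalities'' for a general distinguished index $i$ is a bit more detailed than the paper's one-line remark, but the idea is the same.
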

\begin{proof}
\begin{align*}
LHS&=r^{d_1+\dots+d_{k-1}} \left(\frac{n_1}{r^{d_1}}\right)^{\alpha_1}\dots \left(\frac{n_{k-1}}{r^{d_{k-1}}}\right)^{\alpha_{k-1}}\left(\frac{n_k}{r}\right)^{\alpha_k}\\
&=\left(\prod_i n_i^{\alpha_i}\right) r^{d_1+\dots+d_{k-1}-\sum_{j=1}^{k-1} d_j\alpha_j-\alpha_k}= E_{\vec{d}}(\vec{n})
\end{align*}
In the last step, the exponent of $r$ becomes 0 because $\sum_{j=1}^{k-1} d_j(1-\alpha_j)= \alpha_k$ by Lemma \ref{proof-matrix-equ}.
\end{proof}

\begin{lem}\label{compare d and d-1}
Let $\mathfrak{e}_1,\dots, \mathfrak{e}_k$ be the standard basis in $\R^k$.  Then $F^\varepsilon_{\vec{d}-\mathfrak{e}_i}(\vec{n})\leq F^\varepsilon_{\vec{d}}(\vec{n})$ assuming $n_i\geq n_j^{1/(d_j)}$ for any $j\neq i$.
\end{lem}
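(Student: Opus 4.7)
The plan is to compare $F^\varepsilon_{\vec{d}}(\vec{n})$ and $F^\varepsilon_{\vec{d}-\mathfrak{e}_i}(\vec{n})$ term by term using the definition \eqref{defi_F}. The final summand $(1/n_1+\cdots+1/n_k)\prod_l n_l$ is independent of $\vec d$; so are the factors $\prod_{l\in I}n_l^\varepsilon\prod_{l\notin I}n_l$. Moreover, for every subset $I\subset[k]$ with $i\notin I$ the restriction $\vec{d_I}$ is untouched, so the corresponding summand is literally unchanged. The lemma therefore reduces to the single inequality
\[ E_{\vec{d_I}-\mathfrak{e}_i}(\vec{n_I}) \;\leq\; E_{\vec{d_I}}(\vec{n_I}) \qquad \text{for every } I\ni i \text{ with }|I|\geq 2, \]
under the hypothesis $n_i\geq n_j^{1/d_j}$ for $j\in I\setminus\{i\}$.

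The key step is to use Lemma \ref{proof-matrix-equ}, applied with $I$ in place of $[k]$. Writing $\alpha_l^{(I)}$ for the exponent of $n_l$ in $E_{\vec{d_I}}(\vec{n_I})$, the $i$-th row of the matrix identity \eqref{matrix} reads
\[ \alpha_i^{(I)} + \sum_{j\in I\setminus\{i\}} d_j\,\alpha_j^{(I)} \;=\; \sum_{j\in I\setminus\{i\}} d_j. \]
The crucial observation is that this relation does not involve $d_i$: the coefficient of $\alpha_i^{(I)}$ on the left is $1$, and the right-hand side $\sum_{j\in I}d_j-d_i$ equals $\sum_{j\in I\setminus\{i\}}d_j$ whether we use $d_i$ or $d_i-1$. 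Hence the same equation holds for the exponents $\tilde\alpha_l^{(I)}$ of $E_{\vec{d_I}-\mathfrak{e}_i}(\vec{n_I})$, and subtraction gives
\[ \delta_i^{(I)} + \sum_{j\in I\setminus\{i\}} d_j\,\delta_j^{(I)} = 0, \qquad \delta_l^{(I)}:=\tilde\alpha_l^{(I)}-\alpha_l^{(I)}. \]

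A short check (monotonicity of $x\mapsto x/(S+x)$ in the defining formula \eqref{defi_E}) shows $\delta_i^{(I)}<0$ while $\delta_j^{(I)}>0$ for $j\neq i$, so $|\delta_i^{(I)}|=\sum_{j\neq i}d_j\delta_j^{(I)}$. Applying $n_j\leq n_i^{d_j}$ to each positive exponent yields $n_j^{\delta_j^{(I)}}\leq n_i^{d_j\delta_j^{(I)}}$, and hence
\[ \frac{E_{\vec{d_I}-\mathfrak{e}_i}(\vec{n_I})}{E_{\vec{d_I}}(\vec{n_I})} \;=\; n_i^{\delta_i^{(I)}}\prod_{j\in I\setminus\{i\}} n_j^{\delta_j^{(I)}} \;\leq\; n_i^{\delta_i^{(I)}+\sum_{j\neq i} d_j\delta_j^{(I)}} \;=\; n_i^0 \;=\; 1, \]
which finishes the argument. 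The only mildly delicate point I foresee is the boundary case $d_i=2$, where the literal formula \eqref{defi_E} for $\vec d-\mathfrak{e}_i$ contains the undefined term $1/(d_i-2)$; there one should adopt the denominator-cleared form flagged in the footnote after \eqref{defi_E}. The matrix identity \eqref{matrix} (and hence the cancellation above) continues to hold, so the argument goes through without modification.
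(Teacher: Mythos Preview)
Your proof is correct and follows the same overall reduction as the paper: both observe that only the summands with $i\in I$ change, and both reduce to showing $E_{\vec{d_I}-\mathfrak{e}_i}(\vec{n_I})\leq E_{\vec{d_I}}(\vec{n_I})$, finishing with the hypothesis $n_j\leq n_i^{d_j}$.

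The middle step differs, however. The paper plugs in the explicit formula for the exponents, introduces $M_1=k-1+\frac{1}{d_1-2}+\sum_{l\ge 2}\frac{1}{d_l-1}$ and $M_2=k-1+\frac{1}{d_1-1}+\sum_{l\ge 2}\frac{1}{d_l-1}$, and after several lines of algebra arrives at the equivalent form $\prod_{j\ne i} n_j^{1/(d_j-1)}\leq n_i^{\sum_{j\ne i} d_j/(d_j-1)}$. You instead invoke Lemma~\ref{proof-matrix-equ} and observe that the $i$-th row of \eqref{matrix} does not involve $d_i$; subtracting the two versions gives $\delta_i+\sum_{j\ne i}d_j\delta_j=0$ immediately, with no explicit formulas needed. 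Your route is shorter and more conceptual, and it makes transparent \emph{why} the exponents balance exactly---it is the same linear relation that underlies Corollary~\ref{inflate by r property}. The paper's direct computation, on the other hand, makes the precise values of the $\delta_j$ visible (they share the common factor $\frac{1}{M_1M_2(d_i-1)(d_i-2)}$), which you do not need but which explains the equivalence of the two final inequalities.

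Your remark on the boundary case $d_i=2$ is also appropriate: the paper's computation literally involves $1/(d_1-2)$, while your argument, resting on the polynomial identity \eqref{matrix}, extends to that case via the denominator-cleared form without further comment.
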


\begin{lem}\label{E_d dominant}

 Assume for each $i\in[k]$ we have 
\begin{equation}\label{assume VC bound dominate}
n_i^{-1/d_i}\prod_{j=1}^k n_j\geq n_iF^\varepsilon_{\pi_i(\vec{d})}(\pi_i(\vec{n})),
\end{equation}
then
$ E_{\vec{d}}(\vec{n})\prod_{i=1}^k n_i^\varepsilon \geq cF^\varepsilon_{\vec{d}}(\vec{n})$ for some constant $c$. In other words, $E_{\vec{d}}(\vec{n})\prod_i n_i^\varepsilon$ is the dominant term of $F^\varepsilon_{\vec{d}}(\vec{n})$.
\end{lem}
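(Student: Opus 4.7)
The plan is to show that each of the $\sim 2^k$ summands of $F^\varepsilon_{\vec d}(\vec n)$ is bounded, up to a universal constant, by the single summand $E_{\vec d}(\vec n)\prod_i n_i^\varepsilon$ coming from $I=[k]$. Introduce the abbreviations $\beta_i:=1-\alpha_i=\frac{1/(d_i-1)}{D}$ with $D:=k-1+\sum_l 1/(d_l-1)$, and analogously $\beta_i^{(I)}:=\frac{1/(d_i-1)}{D_I}$ with $D_I:=|I|-1+\sum_{l\in I}1/(d_l-1)$. Two identities will do the heavy lifting: Lemma~\ref{proof-matrix-equ}'s identity $\alpha_j=\sum_{i\neq j}d_i\beta_i$, and the elementary consequence $\beta_i^{(I)}-\beta_i=\beta_i^{(I)}(D-D_I)/D$, both immediate from the definitions.

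First I would extract two consequences from the hypothesis. Because $n_jF^\varepsilon_{\pi_j(\vec d)}(\pi_j(\vec n))$ is a sum of nonnegative terms, each individual summand is at most $n_j^{-1/d_j}\prod_l n_l$. Comparing with the $\tfrac{1}{n_l}\prod_{l'\neq j}n_{l'}$ summands yields the weak bound $n_a\leq n_b^{d_a}$ for all $a\neq b$. Comparing with the $E_{\vec{d_{I'}}}(\vec{n_{I'}})$ summands (for $I'\subset[k]\setminus\{j\}$, $|I'|\geq 2$) yields, after canceling the common factor $\prod_{i\notin I'\cup\{j\}}n_i$, the stronger bound
\[
n_j^{1/d_j}\leq \prod_{i\in I'}n_i^{\beta_i^{(I')}-\varepsilon}.
\]

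Next I would dispose of the boundary pieces $B_j:=n_j^{-1}\prod_l n_l$. Dividing by $E_{\vec d}(\vec n)\prod_i n_i^\varepsilon=\prod_i n_i^{\alpha_i+\varepsilon}$, the desired inequality becomes $n_j^{\alpha_j+\varepsilon}\geq \prod_{i\neq j}n_i^{\beta_i-\varepsilon}$. Applying the weak consequence $n_i\leq n_j^{d_i}$ to each $i\neq j$ and using $\alpha_j=\sum_{i\neq j}d_i\beta_i$ reduces the right-hand side to $n_j^{\alpha_j-\varepsilon\sum_{i\neq j}d_i}$, which is trivially at most $n_j^{\alpha_j+\varepsilon}$.

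Finally, for each $I\subsetneq[k]$ with $|I|\geq 2$, the target inequality $E_{\vec{d_I}}(\vec{n_I})\prod_{i\in I}n_i^\varepsilon\prod_{i\notin I}n_i\leq E_{\vec d}(\vec n)\prod_i n_i^\varepsilon$ rearranges (using $\alpha_i-\alpha_i^{(I)}=\beta_i^{(I)}-\beta_i$) to $\prod_{j\notin I}n_j^{\beta_j-\varepsilon}\leq \prod_{i\in I}n_i^{\beta_i^{(I)}-\beta_i}$. I would apply the strong consequence above with $I'=I$ to each $j\notin I$, bounding the LHS by $\prod_{i\in I}n_i^{(\beta_i^{(I)}-\varepsilon)\sum_{j\notin I}d_j(\beta_j-\varepsilon)}$. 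Setting $\varepsilon=0$ and using the direct computation $\sum_{j\notin I}d_j\beta_j=(D-D_I)/D$ together with the second identity above, the exponent on each $n_i$ becomes exactly $\beta_i^{(I)}-\beta_i$, matching the target on the nose; the $\varepsilon$-corrections on the LHS exponent are strictly negative, so the inequality holds for any sufficiently small $\varepsilon$. The main obstacle is this last step: the weak bound $n_a\leq n_b^{d_a}$ alone is insufficient already when $k=3$ and $|I|=2$, so one must extract the refined consequence from the $E_{\vec{d_{I'}}}$ summands of $F^\varepsilon_{\pi_j}$, and the entire argument then hinges on spotting the algebraic identity $\beta_i^{(I)}-\beta_i=\beta_i^{(I)}(D-D_I)/D$ that aligns the exponents exactly at $\varepsilon=0$.
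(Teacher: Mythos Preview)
Your argument is correct and rests on the same mechanism as the paper's proof. Both extract from the hypothesis the key inequality $n_j^{1/d_j}\le\prod_{i\in I}n_i^{\beta_i^{(I)}}$ (your ``strong consequence''; in the paper this is precisely the content of the equivalence Claim, which asserts that this inequality is simultaneously equivalent to the hypothesis and to the conclusion), and both then feed it into the target comparison. The organization differs: the paper runs a downward induction on $|I|$, peeling off one index $j\notin I$ at a time via the Claim applied to $J=I\cup\{j\}$, whereas you handle all $j\notin I$ in a single product and rely on the explicit identity $\beta_i^{(I)}-\beta_i=\beta_i^{(I)}(D-D_I)/D$ together with $\sum_{j\notin I}d_j\beta_j=(D-D_I)/D$ to match exponents. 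Unwinding the paper's induction recovers exactly your computation, so the two are really the same proof written in different styles.

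One cosmetic point: your restriction to ``sufficiently small $\varepsilon$'' is removable. If you first bound $n_j^{\beta_j-\varepsilon}\le n_j^{\beta_j}$ (valid since $n_j\ge 1$) and then apply the $\varepsilon=0$ form of the strong consequence (which follows from the $\varepsilon>0$ form for the same reason), the exponent on each $n_i$ becomes $\beta_i^{(I)}\sum_{j\notin I}d_j\beta_j=\beta_i^{(I)}-\beta_i$ on the nose for every $\varepsilon\ge 0$, matching the paper's $\varepsilon$-free inequality $E_{\vec d}(\vec n)\ge E_{\vec d_I}(\vec n_I)\prod_{i\notin I}n_i$.
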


\section{Proof of theorem \ref{Fox-extension}}
\label{sec:general Fox}
In this section, we first outline the proof of the last statement in theorem \ref{Fox} and then carry out the modifications needed to prove theorem \ref{Fox-extension}. In both cases, $G=(P,Q)$ is a semi-algebraic $K_{u,u}$-free graph with description complexity $t$. Recall in the last statement of theorem \ref{Fox},  we assume $P$ is a set of $m$ points from an irreducible variety with dimension $e_1$ and bounded complexity in $\R^{d_1}$ and $Q$ is a set of $n$ points in $ \R^{d_2}$, and the conclusion is 
\begin{equation}\label{fox conclusion}|\E|\lesssim m^{\frac{e_1d_2-d_2}{e_1d_2-1}+\varepsilon}n^{\frac{e_1d_2-e_1}{e_1d_2-1}}+m+n. \end{equation}

In theorem \ref{Fox-extension}, we assume additionally that $Q$ belongs to an irreducible variety with bounded complexity and dimension $e_2$ in $\R^{d_2}$ and wish to prove 
\begin{equation}\label{extension conclusion} |\E|\lesssim m^{\frac{e_1e_2-e_2}{e_1e_2-1}+\varepsilon}n^{\frac{e_1e_2-e_1}{e_1e_2-1}}+m+n,\end{equation}
i.e. that we can replace $d_2$ by $e_2$. Note that this is similar in spirit to theorem \ref{Basu-Pollack-Roy}, which is precisely what we shall use.

Fox, Pach, Sheffer, Sulk and Zahl's proof of theorem \ref{Fox} proceeds in two steps: 
first, lemma \ref{lem packing} is used to show $|\E(G)|\lesssim mn^{1-1/d_2}+n$, and then induction and polynomial partitioning are used to derive the desired bound. 

The first step begins by proving that $$\pi_{\mathcal{F}_1}(z)\leq c(t,d_2) z^{d_2}$$ where the set systems $(\F_1,P)$ and $(\F_2,Q)$ are defined in  \ref{sec:VC-dim}. For each $p\in P$, its neighbors belong to a semi-algebraic set $\gamma_p$ in $\R^{d_2}$ defined as followed:
$$\gamma_{p}:=\{x\in\R^{d_2}:\Phi(f_1(p,x)\geq 0,\dots, f_t(p,x)\geq 0)=1\}.$$ 
For any $z$ points $p_1,\dots, p_z\in P$, their  semi-algebraic sets $\gamma_{p_1},\dots, \gamma_{p_z}$ are defined by at most $tz$ polynomials $\{f_j(p_i,x): i\in[z], j\in[t]\}$.  By the Milnor-Thom theorem, these $tz$ polynomials have at most  $c(t,d_2) z^{d_2}$ realizable sign patterns in $\R^{d_2}$. If two points $q$ and $q'$ in $Q$ share the same sign pattern, their neighborhoods in $P$ restricted to $\{p_1,\dots, p_z\}$ are the same. 
As a consequence, $\pi_{\mathcal{F}_1}(z)=\max_{p_i}|\{N(q)\cap\{p_1,\dots, p_z\}: q\in Q\}|\leq c(t,d_2) z^{d_2}$.

Applying lemma \ref{lem packing}, there exists $q_1,\dots, q_u\in Q$ such that at most $O(|P||Q|^{-1/d_2})=O(mn^{-1/d_2})$ sets from $\mathcal{F}_2$ cross $\{q_1,\dots, q_u\}$. On the other hand, there are at most $u-1$ sets from $\mathcal{F}_2$ that contain $\{q_1, \dots, q_u\}$ because the graph is $K_{u,u}$-free. Therefore, the degree of $q_1$ in $G$ is at most $O(mn^{-1/d_2})+(u-1)$. Removing $q_1$ and repeating this argument at most $n$ times, we have $|\E|\lesssim mn^{1-1/d_2}+n$. The term $n$ dominates unless $n<m^{d_2}$. Thus from now on we assume $n<m^{d_2}$. 

In the second step, we view edges of $G$ as incidences between $P$ and $n$ semi-algebraic sets $\{\gamma_q:q\in Q\}$ in $\R^{d_1}$ where $\gamma_q$ is the set of all potential neighbors of $q$ in $\R^{d_1}$, i.e. 

$$\gamma_{q}:=\{x\in\R^{d_1}:\Phi(f_1(x,q)\geq 0,\dots, f_t(x,q)\geq 0)=1\}.$$ 

We prove (\ref{fox conclusion}) by double induction -- first on $e_1$ and then on $m+n$. The result is obvious for $e_1=0$ (since zero-dimensional irreducible varieties are just singletons). For a fixed $e_1\geq 1$, the result is true for small $m+n$. In the induction step, by theorem \ref{poly partitioning}, for a parameter $r$ to be chosen later, we can find a polynomial $f$ not vanishing on $V$  of small degree to partition the points in $P$ equally with respect to the variety $V$ of dimension $e_1$. More precisely, we can find $f$ of degree at most $C_{part} r$ such that $V\setminus Z(f)$ has $s=O(r^{e_1})$ connected components, or \emph{cells}, such that each cell contains $O(\frac{m}{r^{e_1}})$ points of $P$. There are 3 types of incidences between a point $p$ and a semi-algebraic set $\gamma_q$, which we shall group into sets $I_1, I_2,$ and $I_3$ respectively: (1) where $p$ belongs to $Z(f)\cap V$; (2) where $p$ belongs to a cell in $V\setminus Z(f)$ and $\gamma_q$ contains the whole cell; and (3) where $p$ belongs to a cell and $\gamma_q$ \emph{crosses} the cell, i.e. has non-empty intersection with the cell but does not contain the entirety of it. As $|\mathcal{E}| = |I_1| + |I_2| + |I_3|$, it suffices to bound the number of each type of incidence by the right side of (\ref{fox conclusion}).

To bound $|I_1|$, note that $V\cap Z(f)$ is a variety of dimension at most $e_1-1$, so we can apply the inductive hypothesis and get $I_1\lesssim E_{e_1-1, d_2}(m,n)+m+n$. By lemma \ref{compare d and d-1}, $E_{e_1-1, d_2}(m,n)\lesssim E_{e_1,d_2}(m,n)$ when $m>n^{1/d_2}$, which we are allowed to assume (this was the point of step one!)

The number of incidences in $I_2$ is bounded by $um+unr^{e_1}$ because the graph is $K_{u,u}$-free. Indeed, for each cell, either the cell contains fewer than $u$ points, or it is contained in fewer than $u$ semi-algebraic sets $\gamma_q$. The contribution in the first case is at most $un$ in each cell and at most $unr^{e_1}$ in total. The contribution in the second case is at most $um$. We can choose $r$ small enough so that the $nr^{e_1}$ is less than  $m^{\frac{e_1d_2-d_2}{e_1d_2-1}+\varepsilon}n^{\frac{e_1d_2-e_1}{e_1d_2-1}}$ when $n<m^{d_2}$.

Finally, to bound $|I_3|$, suppose there are $s=O(r^{e_1})$ cells $\Omega_1,\dots, \Omega_s$, where each $\Omega_i$ contains $m_i=O(m/r^{e_1})$ points and is crossed by $n_i$ semi-algebraic sets. We claim that each semi-algebraic set $\gamma_q$ crosses  at most $O(r^{e_1-1})$ cells. Indeed, each $\gamma_q$ is defined by $t$ polynomials $f_1(x, q),\dots, f_t(x,q)$. In order for $\gamma_q$ to cross a cell in $V\setminus Z(f)$, some polynomial, say $f_1$, must not vanish on $V$. Then $Z(f_1)\cap V$ is some variety of dimension at most $e_1-1$. By theorem \ref{Tao}, $f$ partitions this variety in at most $O_{t,d_1, D} (r^{e_1-1})$ cells; this in turn implies $\gamma_q$ crosses $O(r^{e_1-1})$ cells. As a result, the total number of pairs $(\Omega_i, \gamma_q)$ such that $\gamma_q$ crosses $\Omega_i$ is $\sum_{i=1}^s n_i\lesssim r^{e_1-1} n$. We apply the inductive hypothesis in each cell (for smaller $m_i+n_i$), add them up, and use H\"older's inequality:
\begin{align*}
I_3& =\sum_{i=1}^s I(m_i, n_i)\lesssim \sum_{i=1}^s \left( m_i^{\frac{e_1d_2-d_2}{e_1d_2-1}+\varepsilon}n_i^{\frac{e_1d_2-e_1}{e_1d_2-1}}+m_i+n_i\right)\\
&\lesssim \sum_{i=1}^s \left(\frac{m}{r^{e_1}}\right)^{\frac{e_1d_2-d_2}{e_1d_2-1}+\varepsilon}n_i^{\frac{e_1d_2-e_1}{e_1d_2-1}}+ m+\sum_{i=1}^s n_i
\\
&\lesssim \left(\frac{m}{r^{e_1}}\right)^{\frac{e_1d_2-d_2}{e_1d_2-1}+\varepsilon} s\left(\frac{\sum n_i}{s}\right)^{\frac{e_1d_2-e_1}{e_1d_2-1}}+m+nr^{e_1-1}
\\
&\lesssim \left(\frac{m}{r^{e_1}}\right)^{\frac{e_1d_2-d_2}{e_1d_2-1}+\varepsilon} r^{e_1}\left(\frac{nr^{e_1-1}}{r^{e_1}}\right)^{\frac{e_1d_2-e_1}{e_1d_2-1}}+m+nr^{e_1-1}
\\
&=r^{-\varepsilon e_1} m^{\frac{e_1d_2-d_2}{e_1d_2-1}+\varepsilon}n^{\frac{e_1d_2-e_1}{e_1d_2-1}}+m+n r^{e_1-1}
\end{align*}
We choose $r$ small enough so that $nr^{e_1-1}$ is bounded by the first term when $n<m^{d_2}$, but also large enough so that $r^{-\varepsilon e_1}$ is strictly smaller than the coefficient chosen in \eqref{main ineq}.
This finishes the proof of theorem \ref{Fox}.

In theorem \ref{Fox-extension}, since  $Q$ belongs to a variety of dimension $e_2$ with complexity $D$,  by corollary \ref{Basu-cor}, $\pi_{\mathcal{F}_1}(z)\leq c(t,d_2, e_2, D) z^{e_2}$. This is ``step one" in the above proof outline, with $d_2$ replaced by $e_2$. 
Combining with lemma \ref{lem packing} we get:
\begin{lem} \label{cross} Given $G=(P,Q,\mathcal{E})$ as in the statement of theorem \ref{Fox-extension}. Then there exist $u$ points $q_1,\dots, q_u\in Q$ such that at most $O_{u,e_2, d_2, t, D}(|P||Q|^{-1/{e_2}})$ sets from $\mathcal{F}_2$ cross $\{q_1,\dots, q_u\}$.
\end{lem}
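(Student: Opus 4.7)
The plan is to establish this lemma as a direct combination of Corollary \ref{Basu-cor} with Lemma \ref{lem packing}, mirroring the argument sketched earlier for Theorem \ref{Fox} but with the Milnor--Thom bound replaced by the Basu--Pollack--Roy bound on the variety containing $Q$. The key observation is that once we know $\pi_{\F_1}(z) \lesssim z^{e_2}$ rather than merely $z^{d_2}$, the packing lemma immediately yields the desired crossing bound with exponent $1/e_2$ in place of $1/d_2$.

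Concretely, fix any $z$ points $p_1,\dots,p_z\in P$, and for each $p_i$ consider the semi-algebraic set
\[ \gamma_{p_i}=\{x\in\R^{d_2}:\Phi(f_1(p_i,x)\geq 0,\dots, f_t(p_i,x)\geq 0)=1\}, \]
which is cut out by the $tz$ polynomials $\{f_j(p_i,\cdot):i\in[z],\,j\in[t]\}$, each of degree at most $t$. Applying Corollary \ref{Basu-cor} to this collection of polynomials over the irreducible variety $V_Q\subset\R^{d_2}$ of dimension $e_2$ and complexity at most $D$ that contains $Q$, we conclude that these polynomials realize at most $O_{D,d_2,e_2}((tz)^{e_2})$ sign patterns on $V_Q$. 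Two points $q,q'\in Q$ that share a sign pattern must have the same neighborhood in $\{p_1,\dots,p_z\}$, so at most $O_{t,D,d_2,e_2}(z^{e_2})$ distinct traces $N(q)\cap\{p_1,\dots,p_z\}$ occur as $q$ ranges over $Q$. Taking the maximum over all choices of the $z$ points $p_i\in P$ gives $\pi_{\F_1}(z)\leq c(t,D,d_2,e_2)\,z^{e_2}$ for all $z$.

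Feeding this shatter-function bound into Lemma \ref{lem packing} with $d=e_2$ and the same parameter $u$ as in the hypothesis, we obtain $u$ points $q_1,\dots,q_u\in Q$ such that at most $O_{u,e_2,d_2,t,D}(|P||Q|^{-1/e_2})$ sets of $\F_2$ cross $\{q_1,\dots,q_u\}$, which is exactly the claim. I do not anticipate a genuine obstacle here: all the technical work has already been packaged into Corollary \ref{Basu-cor} and Lemma \ref{lem packing}, and the only subtle point is to verify that Basu--Pollack--Roy is being applied on the correct ambient object, namely the variety $V_Q$ containing $Q$ (of dimension $e_2$), rather than on the ambient space $\R^{d_2}$ (which would only give the weaker bound $z^{d_2}$ used in the original proof of Theorem \ref{Fox}). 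This replacement is precisely the mechanism by which the exponent $e_2$ propagates through the rest of the argument to yield the improved bound \eqref{extension conclusion}.
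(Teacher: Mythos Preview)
Your proposal is correct and follows essentially the same approach as the paper: apply Corollary \ref{Basu-cor} on the variety $V_Q$ of dimension $e_2$ to obtain $\pi_{\F_1}(z)\leq c(t,d_2,e_2,D)\,z^{e_2}$, and then invoke Lemma \ref{lem packing} with $d=e_2$. The paper's own proof is stated more tersely but is identical in substance.
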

We can subsequently replace $d_2$ by $e_2$ in all other steps and get theorem \ref{Fox-extension}. 
\qed

\section{Proof of the main theorem}\label{sec:general_k}
\subsection*{Overview of the proof.} Our proof will proceed by induction on $k$ -- the statement clearly holds for $k=1$, so fix some $k\geq 2$. For the inductive step, we follow the same general strategy as in the previous section: first using lemma \ref{lem packing} to obtain a bound where the exponents only depend on $\vec{d}$, and then using polynomial partitioning to get a better bound. We highlight some differences: in the first step, we view the hypergraph as a semi-algebraic bipartite graph between $P_k$ and $P_1\times\dots\times P_{k-1}$ to apply lemma \ref{lem packing}.  To bound the number of edges that contain $u$ points in $P_k$, we need to use the induction assumption for some $(k-1)$-uniform semi-algebraic hypergraph on $P_1,\dots, P_{k-1}$. In the second step, we reduce the problem to counting incidences between $n_k$ semi-algebraic sets defined by $P_k$ and the grid $P_1\times \dots\times P_{k-1}$ in $\R^{d_1+\dots+d_{k-1}}$. If we simply apply the usual polynomial partitioning, each cell may not have the structure of a $k$-partite hypergraph. We overcome this by using a \emph{product} of $k-1$ polynomials $f_1,\dots, f_k$ of the same degree,  where $f_i$ partitions $P_i$ equally in $\R^{d_i}$ for  $i<k$.  
By doing this, we preserve the grid structure and thus can use induction on a smaller grid in each cell. 

We begin now with step one.
\begin{prop} \label{prop-init-bound}
Given a $k$-uniform $k$-partite semi-algebraic $K_{u,\dots, u}$-free hypergraph $(P_1,\dots, P_k,\E)$ with description complexity $t$ such that for each  $i\in [k]$, $A_i$ belongs to $V_i$, an irreducible variety in $\R^{d_i}$ of dimension $e_i\leq d_i$ and degree bounded by $D$, then 
$$|\mathcal{E}(H)|=O_{\vec{d},k,u,D,\varepsilon}\left( n_1\dots n_{k-1}n_k^{1-1/e_k}+ n_k F^\varepsilon_{e_1,\dots, e_{k-1}}(n_1,\dots, n_{k-1})\right).$$
\end{prop}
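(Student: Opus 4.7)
The plan is to reduce $H$ to a bipartite graph between $A := P_1 \times \dots \times P_{k-1}$ and $B := P_k$, where $((p_1,\dots,p_{k-1}), p_k)$ is an edge iff $(p_1,\dots,p_k) \in \E(H)$, and then apply Lemma \ref{lem packing}, invoking the main theorem for $(k-1)$-hypergraphs on the common-neighborhood structure. Let $\F_1, \F_2$ denote the associated set systems as in Section \ref{sec:VC-dim} and write $N := n_1 \cdots n_{k-1}$.

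First I would bound the primal shatter function $\pi_{\F_1}$. For each $a = (p_1,\dots,p_{k-1}) \in A$, the neighborhood $N(a) \subseteq P_k$ is cut out from $V_k$ by the $t$ polynomial inequalities $f_j(p_1,\dots,p_{k-1},x) \geq 0$ of degree at most $t$. For any $z$ choices $a_1,\dots,a_z \in A$, Corollary \ref{Basu-cor} applied to $V_k \subset \R^{d_k}$ (dimension $e_k$, bounded complexity) bounds the number of realizable sign patterns of the $tz$ polynomials $\{f_j(a_i,\cdot)\}$ on $V_k$ by $O((tz)^{e_k})$. Since two points of $P_k$ realizing the same sign pattern have equal intersections $N(q) \cap \{a_1,\dots,a_z\}$, we get $\pi_{\F_1}(z) = O_{t,d_k,e_k,D}(z^{e_k})$.

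Next, Lemma \ref{lem packing} produces $u$ points $q_1,\dots,q_u \in P_k$ such that at most $O(N n_k^{-1/e_k})$ sets of $\F_2$ cross $\{q_1,\dots,q_u\}$. I would then bound the degree of $q_1$ in $G$: any $a \in A$ with $q_1 \in N(a)$ either has $N(a)$ crossing $\{q_1,\dots,q_u\}$ (contributing $O(N n_k^{-1/e_k})$ such $a$) or satisfies $\{q_1,\dots,q_u\} \subseteq N(a)$. The latter condition amounts to $(p_1,\dots,p_{k-1},q_j) \in \E(H)$ for every $j \in [u]$, which is precisely the hyperedge relation of a $(k-1)$-uniform $(k-1)$-partite semi-algebraic hypergraph $H'$ on $P_1,\dots,P_{k-1}$; its description complexity is bounded in terms of $t$ and $u$ via a Boolean conjunction of $u$ copies of the relation defining $H$, and any $K_{u,\dots,u}$ in $H'$ together with $q_1,\dots,q_u$ would produce a $K_{u,\dots,u}$ in $H$, so $H'$ is $K_{u,\dots,u}$-free. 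By the outer induction hypothesis, the main theorem for $(k-1)$-hypergraphs (base case $k-1=2$ being Theorem \ref{Fox-extension}) then gives $|\E(H')| = O(F^\varepsilon_{e_1,\dots,e_{k-1}}(n_1,\dots,n_{k-1}))$.

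Finally I would iterate, removing $q_1$ from $P_k$ and repeating on the shrinking sequence $n_k, n_k-1, \dots, 1$. Summing the degree bounds and using $\sum_{j=1}^{n_k} j^{-1/e_k} = O(n_k^{1-1/e_k})$ yields
\[
|\E(H)| \leq \sum_{j=1}^{n_k} O\bigl(N j^{-1/e_k} + F^\varepsilon_{e_1,\dots,e_{k-1}}(n_1,\dots,n_{k-1})\bigr) = O\bigl(N n_k^{1-1/e_k} + n_k F^\varepsilon_{e_1,\dots,e_{k-1}}(n_1,\dots,n_{k-1})\bigr),
\]
which is the claim. The main subtlety I anticipate is tracking the description complexity of $H'$ through the recursion: each inductive step inflates the complexity parameter by a factor depending on $u$, so one must verify that the implied constants remain uniformly bounded as the recursion unfolds over all $k$ levels.
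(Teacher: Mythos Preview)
Your proposal is correct and follows essentially the same route as the paper: the paper also views $H$ as a bipartite graph between $P_1\times\dots\times P_{k-1}$ and $P_k$, uses the shatter-function bound on $V_k$ (packaged there as Lemma~\ref{cross}) together with Lemma~\ref{lem packing} to find $q_1,\dots,q_u$, bounds the containing sets via the induced $(k-1)$-hypergraph $H'$ and the induction hypothesis, and then removes $q_1$ and iterates. Your remark about tracking description complexity is valid but harmless: the paper notes $H'$ has complexity $tu$, and since the recursion has depth $k-1$ the complexity stays bounded by a function of $t,u,k$, which is absorbed into the implied constants.
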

\begin{proof}
Given $p_i\in P_i$ for $i\in[k-1]$, let $N(p_1,\dots, p_{k-1}):=\{p_k\in P_k: (p_1,\dots, p_k)\in \E\}$ be their neighbors in $P_k$. Moreover, let $\mathcal{F}:=\{N(p_1,\dots, p_{k-1}): p_i\in P_i\}$ be the set system on the ground set $P_k$.

We first claim there exist $u$ points $q_1,\dots, q_u$ in $P_k$ such that the number of sets in $\F$ that cross $\{q_1,\dots, q_u\}$ is at most $c_1n_1\dots n_{k-1}n_k^{-1/e_k}$ for some $c_1(\vec{d}, t,u)$. 
Indeed, we can think of our hypergraph as a bipartite graph $(P_1\times \dots\times P_{k-1}, P_k)$ where there is an edge between $(p_1,\dots, p_{k-1})\in P_1\times\dots\times P_{k-1}$ and $p_k\in P_k$ iff there is a hyperedge $(p_1,\dots, p_k)$ in $\mathcal{E}$. Clearly this bipartite graph is semi-algebraic with description complexity $t$, hence we can apply lemma \ref{cross} for this graph where $Q$ is $P_k$ and obtain the desired claim.

 Next, we claim that the number of sets in $\mathcal{F}$ that contain $\{q_1,\dots, q_u\}$ is at most  $c_2F^\varepsilon_{d_1,\dots, d_{k-1}}(n_1,\dots, n_{k-1})\allowbreak$ for some constant $c_2$. Indeed,
let $\E'$ be the sets of all $(p_1,\dots, p_{k-1})\in P_1\times \dots\times P_{k-1}$ such that their neighbor set $N(p_1,\dots,p_{k-1})$ contains $\{q_1,\dots, q_u\}$. Then the \emph{induced} $(k-1)$-uniform hypergraph $H'=(P_1,\dots,P_{k-1},\mathcal{E}')$ is semi-algebraic with description complexity $tu$ and contains no $K_{u,\dots,u}$. 
Inductively,  $|\mathcal{F}'|\leq  c_2F^\varepsilon_{e,\dots, e_{k-1}}(n_1,\dots, n_{k-1})$. 

Combining these two claims,  we conclude that there are at most $(c_1+c_2)(n_1\dots n_{k-1}n_k^{-1/e_k}+  F^\varepsilon_{e_1,\dots, e_{k-1}}(n_1,\dots, n_{k-1}))$ hyperedges in $H$ that contain $q_1$ (because each such hyperedge must either cross or contain $\{q_1,\dots, q_u\})$. Removing this vertex and repeating this argument until there are fewer than $u$ vertices in $P_k$, we get 
$$|\mathcal{E}(H)|\leq c_3 \left(n_1\dots n_{k-1}n_k^{1-1/e_k}+ n_k F^\varepsilon_{e_1,\dots, e_{k-1}}(n_1,\dots, n_{k-1})\right)$$
for some constant $c_3(\vec{d}, t,u)$. This finishes the proof of proposition \ref{prop-init-bound}.
\end{proof}
\begin{cor}
Theorem \ref{main thm} holds if $n_1\dots n_{k} n_i^{-1/e_i}\leq n_iF^\varepsilon_{\pi_i(\vec{e})}(\pi_i(\vec{n}))$ for some $i\leq k$. 
\end{cor}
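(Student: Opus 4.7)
The plan is to show that under the stated hypothesis, the bound from Proposition \ref{prop-init-bound} is already strong enough to give Theorem \ref{main thm} directly, without needing the polynomial-partitioning step. First, I would observe that although Proposition \ref{prop-init-bound} was stated by singling out the index $k$, the argument is symmetric in the parts: by permuting the roles of $P_1,\dots,P_k$ we obtain, for each $i\in[k]$,
\begin{equation*}
|\mathcal{E}(H)| \;\leq\; c\left( \Bigl(\prod_{j=1}^k n_j\Bigr)\, n_i^{-1/e_i} \;+\; n_i\, F^\varepsilon_{\pi_i(\vec{e})}(\pi_i(\vec{n})) \right),
\end{equation*}
where the constant $c$ depends only on the allowed parameters. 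Fix the index $i$ for which the hypothesis $n_1\cdots n_k\, n_i^{-1/e_i}\leq n_i F^\varepsilon_{\pi_i(\vec{e})}(\pi_i(\vec{n}))$ holds. Then the second term on the right dominates, and
\begin{equation*}
|\mathcal{E}(H)| \;\leq\; 2c\, n_i\, F^\varepsilon_{\pi_i(\vec{e})}(\pi_i(\vec{n})).
\end{equation*}

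Next I would check that $n_i\, F^\varepsilon_{\pi_i(\vec{e})}(\pi_i(\vec{n}))\leq F^\varepsilon_{\vec{e}}(\vec{n})$ by expanding both sides using the definition \eqref{defi_F}. The factor $n_i$ distributes over the sum, turning each term of $F^\varepsilon_{\pi_i(\vec{e})}(\pi_i(\vec{n}))$ into a term of the form $E_{\vec{e_J}}(\vec{n_J})\prod_{j\in J} n_j^\varepsilon \prod_{j\notin J, j\neq i} n_j \cdot n_i = E_{\vec{e_J}}(\vec{n_J})\prod_{j\in J} n_j^\varepsilon \prod_{j\notin J} n_j$, indexed by subsets $J\subset [k]\setminus\{i\}$ with $|J|\geq 2$. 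These are exactly a subcollection of the terms appearing in the sum defining $F^\varepsilon_{\vec{e}}(\vec{n})$ (namely those with $i\notin J$). Similarly the trailing piece $n_i\bigl(\sum_{j\neq i}\frac{1}{n_j}\bigr)\prod_{j\neq i}n_j = \bigl(\sum_{j\neq i}\frac{1}{n_j}\bigr)\prod_{j=1}^k n_j$ is bounded by $\bigl(\sum_{j=1}^k \frac{1}{n_j}\bigr)\prod_{j=1}^k n_j$. So the desired inequality follows by term-by-term comparison, since every term is nonnegative.

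Combining the two displays gives $|\mathcal{E}(H)|\leq 2c\, F^\varepsilon_{\vec{e}}(\vec{n})$, which is exactly the conclusion of Theorem \ref{main thm}. There is no real obstacle here; the only thing to be careful about is matching indices correctly in the expansion of $F^\varepsilon$ so that every term produced by the factor $n_i$ genuinely appears in $F^\varepsilon_{\vec{e}}(\vec{n})$, which reduces to noting that $\{J\subset [k]\setminus\{i\}:|J|\geq 2\}\subset \{I\subset [k]:|I|\geq 2\}$.
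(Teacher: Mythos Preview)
Your proposal is correct and follows essentially the same argument as the paper: apply the symmetric version of Proposition~\ref{prop-init-bound} with index $i$, use the hypothesis so that the second term dominates, and then observe that $n_i F^\varepsilon_{\pi_i(\vec{e})}(\pi_i(\vec{n}))$ consists exactly of the terms of $F^\varepsilon_{\vec{e}}(\vec{n})$ indexed by subsets $I$ not containing $i$. Your expanded verification of the last inequality is a bit more detailed than the paper's one-line remark, but the approach is identical.
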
 \begin{proof}
By symmetry, Proposition \ref{prop-init-bound} holds if we replace $k$ by $i$. Hence $|\mathcal{E}|\lesssim n_1\dots n_{k} n_i^{-1/e_i}+ n_i F^\varepsilon_{\pi_i(\vec{e})}(\pi_i(\vec{n})) \leq 2 n_iF^\varepsilon_{\pi_i(\vec{e})}(\pi_i(\vec{n})) \lesssim F^\varepsilon_{\vec{e}}(\vec{n})$. The last inequality follows from the fact that $n_iF^\varepsilon_{\pi_i(\vec{e})}(\pi_i(\vec{n}))$ are terms that appear in the definition  \eqref{defi_F} of $F^\varepsilon_{\vec{e}}(\vec{n})$; in fact, they are precisely the terms where $I\not\owns i$.
\end{proof}
\begin{remark}\label{assumption}
Thus from now on we can assume $n_1\dots n_{k-1} n_k n_i^{-1/e_i}\geq  n_i F^\varepsilon_{\pi_i(\vec{d})}(\pi_i(\vec{n}))$ for any $i\in[k]$. In particular, we can assume $n_j\geq   n_i^{1/e_i}$ for any $i\neq j$ (since $n_1\dots n_k n_i^{-1/e_i}\geq n_1\dots n_k n_j^{-1}$). By Lemma \ref{E_d dominant}, we can assume
$E_{\vec{e}}(\vec{n})$ is the dominant term in $F^\varepsilon_{\vec{e}}(\vec{n})$. 
\end{remark}

In the second step, 
we think of the hyperedges as incidences between $n_k$ semi-algebraic sets from $P_k$ with a grid $P_1\times\dots\times P_{k-1}$ in $\R^{d_1+\dots+d_{k-1}}$. 
Recall there are $t$ polynomials $f_1,\dots, f_t$ and a Boolean function $\phi$ such that $\vec{p}=(p_1,\dots, p_k)\in P_1\times \dots\times P_k$ is a hyperedge of $H$ if and only if $\phi(f_1(\vec{p})\geq 0,\dots, f_t(\vec{p})\geq 0)=1$.
For each $p\in P_k$, define the set of its neighbors: $\gamma_p:=\{(x_1,\dots, x_{k-1})\in\R^{d_1}\times\dots\times \R^{d_{k-1}}:  (x_1,\dots, x_{k-1},p)\in \mathcal{E}(H) \}$. It is easy to see each $\gamma_p$ is a semi-algebraic set in $\R^{d_1+\dots+d_{k+1}}$ defined by $f_1,\dots, f_t$ and $\phi$. Moreover, $\mathcal{E}$ is exactly the number of incidences between these semi-algebraic sets $\{\gamma_p\}_{p\in P_k}$ with the grid $P_1\times\dots\times P_{k-1}$, which is denoted  by $I(P_1\times\dots\times P_{k-1}, P_k)$.

We now prove $I(P_1\times\dots\times P_{k-1}, P_k)\lesssim F^\varepsilon_{\vec{e}}(\vec{n})$  by induction on $\sum_{i=1}^{k-1} e_i$ and then on $\sum_{i=1}^{k-1} n_i$.  The statement is vacuous when $\sum e_i=0$. Fix some $e_1,\dots, e_k$ and assume our inequality \eqref{main ineq} holds whenever $\sum \dim(V_i) <\sum e_i$. We now use induction on $\sum_{i=1}^{k-1} n_i$. We can choose the coefficient big enough so that it holds for small $n_1,\dots, n_k$. 
For the induction step, fix some $n_1,\dots, n_k$ and assume \eqref{main ineq} holds whenever $\sum |P_i|< \sum n_i$. Let $r>0$ be a constant  to be chosen later. By theorem \ref{poly partitioning} for each $i<k$, there exists an $r^{e_i}$-partitioning polynomial $f_i\in \R[x_1,\dots, x_{d_i}]$ with respect to $V_i$ of degree at most $C_{part} r$. 
The polynomial we use to partition the grid $P_1\times \dots\times P_{k-1}$ is
$$h(x_1,\dots, x_{d_1+\dots+d_{k-1}})= f_1(x_1,\dots, x_{d_1})f_2(x_{d_1+1},\dots,x_{d_1+d_2})\dots f_{k-1}(x_{d_1+\dots+d_{k-2}+1},\dots, x_{d_1+\dots+d_{k-1}}).$$
By theorem \ref{Tao}, for each $i<k$, $f_i$ divides $\R^{d_i}$ into $s_i=O(r^{e_i})$ cells. 
Therefore $\R^{d_1+\dots+d_{k-1}}\setminus Z(h)$ consists of $s_1\dots s_{k-1}=O(r^{e_1+\dots+e_{k-1}})$ cells, each cell contains a sub-grid of $P_1\times \dots\times P_{k-1}$ of size at most $\frac{n_1}{r^{e_1}}\times\dots\times\frac{n_{k-1}}{r^{e_{k-1}}}$. We consider 3 types of incidences in $I(P_1\times\dots\times P_{k-1},P_k)$:

\begin{itemize}
\item $I_1$ consists of the incidences $(p_1,\dots, p_k)$ where $p_i\in V_i\cap Z(f_i)$ for some $i<k$.
\item $I_2$ consists of the incidences $(p_1,\dots,p_k)$ where $(p_1,\dots,p_{k-1})$ is in a cell $\Omega$ of the partitioning of $h$ and the semi-algebraic set $\gamma_{p_k}$ fully contains $\Omega$.
\item $I_3$ consists of the incidences $(p_1,\dots,p_k)$ where $(p_1,\dots,p_{k-1})$ is in a cell $\Omega$ of the partitioning of $h$ and the semi-algebraic set $\gamma_{p_k}$ intersects $\Omega$ but does not fully contain $\Omega$ (in other words, $\gamma_{p_k}$ crosses $\Omega$, or $\gamma_{p_k}$ properly intersects $\Omega$).
\end{itemize} 

Then we have $I(P_1\times\dots\times P_{k-1},P_k)=I_1+I_2+I_3$.
\\
\\
\textbf{Bounding $I_1$:} Let $I_1^i$ denote 
  the number of incidences $(p_1,\dots, p_k)$ where $p_i$ belongs to $V_i\cap Z(f_i)$. Since $I_1\leq \sum_i I^i_1$, it is enough to bound $I^1_1$ (the bound for $I^i_1$ for $i>2$ is similar).
The points of $P_1$ participating in $I_1^1$ belong to $Z(f_1)\cap V_1$. Assume $V':=Z(f_1)\cap V_1$ has dimension $e'_1$. Since $V_1$ is irreducible and $f\notin I(V_1)$, we have $e'_1<\dim V_1=e_1$. We can partition this new variety into at most $c_1$ irreducible varieties, each of dimension at most $e_1-1$ and degree at most $c_2$ where $c_1,c_2$ only depend on $D_1, C_{part}, d_1$ and $r$. Since $e'_1+e_2+\dots+e_{k-1}<e_1+\dots+e_{k-1}$, we can apply induction hypothesis for each irreducible component and add together to get 
$I^1_1\lesssim F^\varepsilon_{e'_1, e_2,\dots, e_k}(\vec{n})$. By 
applying Lemma \ref{compare d and d-1} repeatedly and by remark \ref{assumption}, we have $F^\varepsilon_{e'_1,\dots, e_k}(\vec{n})\leq F_{e_1,\dots, e_k}(\vec{n})$. Thus $I^1_1\lesssim F^\varepsilon_{\vec{e}}(\vec{n}).$
\\
\\
\textbf{Bounding $I_2$:} 
Any cell in the partitioning using $h$ has form $\Omega=\Omega^{f_1}\cap \dots \cap \Omega^{f_{k-1}}$ where  $\Omega^{f_i}$ is some cell in the decomposition by $f_i$. For each $i<k$, the contribution to $I_2$ from all cells that satisfy $|\Omega^{f_i}\cap P_i|< u$ is bounded by $u\prod_{j\neq i} n_j$. Hence we only need to bound the contribution from cells that contain a grid of size at least $u\times \dots\times u$ (there are $k-1$ $u's$). For such a cell $\Omega$, the number of  semi-algebraic sets $\gamma_{p_k}$ that  contain  $\Omega$ is bounded by $u$, because otherwise the hypergraph would contain $K_{u,\dots,u}$. Thus the contribution to $I_2$ from this last cell type is at most $u\prod_{i<k} n_i$. In conclusion, $I_2\lesssim \prod n_i (\sum \frac{1}{n_i})< F^\varepsilon_{\vec{e}}(\vec{n})$.
\\
\\
\noindent\textbf{Bounding $I_3$:} For each $i<k$, and each $j\leq s_j$, let $n_{i,j}$ denote the number of points of $P_i$ that lies in a cell $\Omega^{f_i}_j$. Then clearly for each $i<k$, $\sum_{j=1}^{s_i} n_{i,j}\leq n_i$. For $j_i\leq s_i$, let $n_{k,j_1,\dots, j_{k-1}}$ denote the number of semi-algebraic sets $\gamma_{p_k}$ that cross the cell $\Omega^{f_1}_{j_1}\cap\dots\cap\Omega^{f_{k-1}}_{j_{k-1}}$. Using a similar argument with the previous section, by theorem \ref{Tao}, each semi-algebraic set $\gamma_{p_k}$ crosses at most $O(r^{e_1+\dots+e_{k-1}-1})$ cells, and hence $\sum n_{k,j_1,\dots, j_{k-1}}\leq r^{e_1+\dots+e_{k-1}-1} n_k$. 
Hence 
\begin{align}
I_3 &=\sum_{j_1,\dots, j_{k-1}} I(n_{1,j_1},\dots,n_{k-1,j_{k-1}} ,n_{k,j_1,\dots, j_{k-1}})\nonumber\\
&\label{eq-1}\lesssim \sum_{j_1,\dots, j_{k-1}} F^\varepsilon_{\vec{e}}(n_{1,j_1},\dots, n_{k-1, j_{k-1}}, n_{k,j_1,\dots, j_{k-1}})\\
&\label{eq-2}\leq r^{e_1+\dots+e_{k-1}} F^\varepsilon_{\vec{e}}(\frac{n_1}{r^{e_1}},\dots,\frac{n_{k-1}}{r^{e_{k-1}}},\frac{n_k}{r})
\\
& \label{eq-3}\leq r^{-\varepsilon} F^\varepsilon_{\vec{e}} (\vec{n}).
\end{align}
Here \eqref{eq-1} follows by the induction assumption for smaller $\sum n_i$, \eqref{eq-2} follows by H\"older's inequality (as all the exponents are either 1 or less than 1) and the last step \eqref{eq-3} follows from Corollary \ref{inflate by r property} (note that  by remark \ref{assumption}, we only need to care about the dominant term $E_{\vec{d}}(\vec{n})\prod_i n_i^\varepsilon)$.

Adding $I=I_1+I_2+I_3$, choosing appropriate $r$ and coefficients, we get the desired bound.
This finishes the proof of our main theorem.
\qed 
\section{Applications}\label{section:application}
\subsection{Almost-unit-area triangle problem}
Many geometric problems can be viewed as counting edges in a certain semi-algebraic hypergraph. Let us start with the unit area triangle problem: namely, given $n$ points in $\R^2$, how many triangles of area 1 can they form? Given such a point set $P$, we can construct the 3-uniform 3-partite hypergraph $(P,P,P,\mathcal{E})$, where three points form a hyperedge iff they define a triangle of unit area. This is a semi-algebraic hypergraph with bounded complexity that contains no $K_{1,2,2}$ (see lemma \ref{no K222}). Hence the number of unit area triangles, which is the number of hyperedges, is $O(n^{12/5+o(1)})$ by Theorem \ref{main thm}. This is weaker than the best bound known $O(n^{9/4})$ (see \cite{Raz unit triangle}), but more robust. For example, if we are interested in counting the number of triangles with area close to 1, say between $0.9$ and $1.1$ with an additional condition of not containing some $K_{u,u,u}$, Theorem \ref{main thm} gives us a nice bound. The hypergraph formed by all triangles with areas between 0.9 and 1.1 is still semi-algebraic with bounded complexity, and hence Theorem \ref{main thm} has the following corollary:
\begin{cor}
Given a set $S$ of $n$ points in $\R^2$ and some $u\geq 1$. Assume the hypergraph formed by all triangles with areas between 0.9 and 1.1 contains no $K_{u,u,u}$ (i.e. there  do not exist disjoint sets $A,B,C\subset S$ such that $|A|,|B|,|C|\geq u$ and each $a\in A, b\in B, c\in C$ forms a triangle of area between 0.9 and 1.1). Then the number of those triangles is $O_\varepsilon(n^{12/5+\varepsilon})$ for arbitrarily small $\varepsilon>0$.
\end{cor}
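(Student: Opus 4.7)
The plan is to recognize the triangle-counting problem as counting edges in a $3$-uniform, $3$-partite semi-algebraic hypergraph and then invoke Theorem \ref{main thm}. Set $P_1=P_2=P_3=S$ and let $\mathcal{E}$ consist of all triples $(p_1,p_2,p_3)\in P_1\times P_2\times P_3$ whose vertices form a triangle of area in $[0.9,1.1]$. Writing $p_i=(x_i,y_i)$, the signed double area is the degree-$2$ polynomial
$$A(p_1,p_2,p_3)=x_1(y_2-y_3)+x_2(y_3-y_1)+x_3(y_1-y_2),$$
so the area condition becomes the Boolean combination $3.24\leq A^2\leq 4.84$ of two polynomial inequalities of degree $4$. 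Hence $H=(P_1,P_2,P_3,\mathcal{E})$ is semi-algebraic with description complexity $t=O(1)$ independent of $n$, and the assumption of the corollary says precisely that $H$ contains no $K_{u,u,u}$.

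Next I apply Theorem \ref{main thm} with $k=3$, $d_1=d_2=d_3=2$ and $n_1=n_2=n_3=n$. Here $k-1+\sum_{i=1}^{3}1/(d_i-1)=2+3=5$, so every exponent in formula (\ref{defi_E}) equals $1-1/5=4/5$, giving $E_{(2,2,2)}(n,n,n)=n^{12/5}$. For each $I\subset[3]$ with $|I|=2$ the corresponding summand of $F^\varepsilon_{(2,2,2)}$ contributes $n\cdot E_{(2,2)}(n,n)\cdot n^{2\varepsilon}=n\cdot n^{4/3}\cdot n^{2\varepsilon}=n^{7/3+2\varepsilon}$, and the final tail $(\sum 1/n_i)\prod n_i$ contributes $O(n^2)$; both are dominated by $n^{12/5+\varepsilon}$ after adjusting $\varepsilon$. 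Therefore $|\mathcal{E}(H)|=O_\varepsilon(n^{12/5+\varepsilon})$. Since each unordered triangle with three distinct vertices contributes exactly $3!=6$ tuples to $\mathcal{E}$, and tuples with some $p_i=p_j$ account for only $O(n^2)$ in total, the count of genuine triangles with area in $[0.9,1.1]$ also satisfies the advertised bound.

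The only step that could in principle be delicate is the semi-algebraic bookkeeping, i.e.\ confirming that the area condition is captured by a fixed number of polynomial inequalities of fixed degree, independent of $n$. This is immediate from the polynomial expression for $A$ above. So there is essentially no obstacle: the corollary reduces to a direct application of Theorem \ref{main thm} together with the routine calculation of the dominant term of $F^\varepsilon_{(2,2,2)}$.
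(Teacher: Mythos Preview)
Your proof is correct and follows the same approach as the paper: form the $3$-partite semi-algebraic hypergraph on $S\times S\times S$, verify bounded description complexity, and invoke Theorem~\ref{main thm} with $k=3$, $d_1=d_2=d_3=2$ to obtain the $n^{12/5+\varepsilon}$ bound. The one small gloss is that the corollary's hypothesis speaks of \emph{disjoint} sets $A,B,C$, whereas you assert directly that $H$ is $K_{u,u,u}$-free; this is fine because any copy of $K_{u,u,u}$ in $H$ must use pairwise disjoint parts (a repeated vertex would force a degenerate triangle of area $0\notin[0.9,1.1]$), so the two conditions are equivalent.
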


\subsection{Unit-minor problem}\label{sec:unit minor} The previous application seems somewhat artificial as we had to impose the $K_{u,\dots, u}$-free condition. In our next application, that condition is automatically satisfied.

A natural generalization of the unit area triangle problem is to ask for the maximum number of unit-volume $d$-dimensional simplices formed by $n$ points in $\R^d$ for some fixed positive integer $d$ (see \cite{Unit area prob}). The best known bound when $d=3$ is $O(n^{7/2})$ in \cite{unit-vol-3d}. 
For general $d$, we can bound that number by $nf_d(n)$ where $f_d(n)$ is the number of unit-volume simplices with a fixed vertex, say the origin. Interestingly, this is equivalent to the unit-minor problem: What is the maximum number of unit $d\times d$ minors that appear in a $d\times n$ matrix $M$ without repeated columns?\footnote{If we allow $M$ to have repeated columns, the answer is $\Theta( n^d$), trivially.} Indeed, if we regard the column vectors of $M$ as points in $\R^d$, then the $d+1$ points $0,v_1,\dots, v_d$ form a unit-volume simplex if and only if $\det(v_1,\dots, v_d)=\pm 1$. 
When $M$ is \emph{totally positive,} that is, when all minors of $M$ are strictly positive, the best know upper bounds on $f_d(n)$ are given by the following theorem:
\begin{thm}[Farber, Ray and Smorodinsky \cite{minor in matrix}, 2014]\label{farber}
Let $f^{+}_d(n)$ denote the maximum number of $d\times d$ unit minors in a totally positive $d\times n$ matrix, then $$f^+_d(n)=\begin{cases} \Theta(n^{4/3}) &\mbox{if } d=2  \\
O(n^{11/5}) &\mbox{if } d=3\\
O(n^{d-\frac{d}{d+1}}) & \mbox{if } d\geq 4.\end{cases} $$
\end{thm}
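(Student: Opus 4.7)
The plan is to recast the unit-minor problem as a point--hyperplane incidence problem and invoke Szemer\'edi--Trotter-type bounds. Write $v_1,\dots,v_n\in\R^d$ for the columns of the totally positive matrix. For each $(d-1)$-subset $I=\{i_1,\dots,i_{d-1}\}\subset[n]$, the equation $\det(v_{i_1},\dots,v_{i_{d-1}},x)=1$ defines an affine hyperplane $H_I\subset\R^d$, and every unit $d\times d$ minor $(v_{j_1},\dots,v_{j_d})$ corresponds, up to a $d!$ factor for orderings, to the incidence $v_{j_d}\in H_{\{j_1,\dots,j_{d-1}\}}$. The problem thus reduces to counting incidences of $n$ points with at most $N=O(n^{d-1})$ affine hyperplanes in $\R^d$.

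First I would verify a K\H{o}v\'ari--S\'os--Tur\'an-type condition on the incidence bipartite graph: if two distinct columns were to lie on a common family of $d+1$ hyperplanes $H_{I_1},\dots,H_{I_{d+1}}$, each column would satisfy $d+1$ affine equations in $\R^d$, an over-determined system whose solution is generically unique. Total positivity (every maximal minor is strictly positive) supplies the non-degeneracy needed to rule out two columns coinciding in such a way, so the incidence graph is $K_{2,d+1}$-free. Then for $d=2$ this is exactly the Szemer\'edi--Trotter configuration with $n$ points and $\leq n$ lines, yielding the upper bound $O(n^{4/3})$; the matching $\Omega(n^{4/3})$ lower bound comes from a direct totally positive construction achieving this order. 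For $d\geq 4$ I would apply the Apfelbaum--Sharir point--hyperplane incidence bound
$$I(P,\mathcal{H})=O\!\left((|P||\mathcal{H}|)^{d/(d+1)}+|P|+|\mathcal{H}|\right)$$
with $|P|=n$ and $|\mathcal{H}|=O(n^{d-1})$, obtaining $O(n^{d\cdot d/(d+1)})=O(n^{d-d/(d+1)})$, which dominates the linear terms for every $d\geq 2$.

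The main obstacle is the sharper exponent $11/5$ in the case $d=3$, since the generic argument above only delivers $O(n^{9/4})$. To improve it I would exploit the extra algebraic structure of the family $\{H_{ij}\}$ for $d=3$: each plane has the form $\{x:(v_i\times v_j)\cdot x=1\}$, so the whole family is parameterized by pairwise cross products of columns and inherits Pl\"ucker-type relations from total positivity, far from an arbitrary collection of planes in $\R^3$. I would try to combine polynomial partitioning (Theorem \ref{poly partitioning}) on the point set with a refined point--plane incidence bound tailored to this structured family -- for instance by transferring to a point--curve problem in a lower-dimensional parameter space in the spirit of Pach--Sharir, or by an algebraic partitioning that exploits the low-degree variety on which the normal vectors $v_i\times v_j$ live -- in the hope of pushing the exponent from $9/4$ down to $11/5$.
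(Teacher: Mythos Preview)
This theorem is quoted in the paper as a result of Farber, Ray and Smorodinsky \cite{minor in matrix}; the paper does not supply its own proof, only the one-line summary that ``their proof uses point-hyperplane incidences: fix $d-1$ points, then the set of all points that form a minor 1 with those vectors is a hyperplane.'' Your proposal follows exactly this reduction, so at the level of strategy it matches what the paper reports.

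There are, however, two real gaps in your sketch. First, the $K_{2,d+1}$-freeness of the incidence graph is asserted but not established: you say total positivity ``supplies the non-degeneracy needed,'' but this is precisely the heart of the argument and cannot be waved away. As the paper itself stresses, without total positivity the hyperplanes $H_I$ may coincide and the incidence graph may contain arbitrarily large complete bipartite subgraphs; your over-determined-system heuristic does not rule this out, since two points can lie on arbitrarily many common hyperplanes once those hyperplanes share a line. One must actually use the sign conditions on the minors to control such degeneracies, and you have not done so. Second, for $d=3$ you correctly note that the generic point--plane bound gives only $O(n^{9/4})$ rather than $O(n^{11/5})$, and then offer a wish list (Pl\"ucker relations, polynomial partitioning, transfer to a point--curve problem) with no concrete mechanism. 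The improvement from $9/4$ to $11/5$ in \cite{minor in matrix} requires a specific structural argument exploiting total positivity, and your proposal does not supply one; as written, the $d=3$ case is simply unproved.
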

Their proof uses point-hyperplane incidences: fix $d-1$ points, then the set of all points that form a minor 1 with those vectors is a hyperplane. In general, those hyperplanes may not be distinct, and the point-hyperplane graph can contain large complete bipartite graphs $K_{u,u}$. Farber et al. avoid these issues by imposing the total positivity constraint. 
Our theorem \ref{main thm} provides another way around these issues (without requiring total positivity) to obtain non-trivial bounds on $f_d(n)$. 
\begin{thm}
\label{minor} Let $f_d(n)$ denote the maximum number of unit $d\times d$ minors in a $d\times n$ matrix without repeated columns, then\footnote{Note that this lower bound together with the upper bound from Theorem \ref{farber} imply that $f_d(n)$ and $f_d^+(n)$ are fundamentally different.}
$n^{d-2/3}\lesssim_d f_d(n)\lesssim_{d,\varepsilon} n^{d-\frac{d}{d^2-d+1}+\varepsilon}.$
\end{thm}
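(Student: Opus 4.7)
The plan is to treat the upper and lower bounds separately. For the \emph{upper bound}, I would regard the $n$ columns of $M$ as a set $P$ of $n$ distinct points in $\R^d$ and form the $d$-uniform $d$-partite hypergraph $H=(P_1,\dots,P_d,\E)$, with each $P_i$ a copy of $P$ and $(v_1,\dots,v_d)\in\E$ iff $\det[v_1\mid\cdots\mid v_d]=\pm 1$. This relation is cut out by the single polynomial equation $\det(v_1,\dots,v_d)^2-1=0$, of degree $2d$ in $d^2$ variables, so $H$ is semi-algebraic with description complexity $O_d(1)$. Lemma~\ref{no K222} ensures that $H$ is $K_{2,\dots,2}$-free. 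Applying Theorem~\ref{main thm} with $k=d$, $d_i=d$, $u=2$, and $n_i=n$, the $j=d$ summand in the sum from remark~(iii) dominates because $g(j):=j/((j-1)d+1)$ is strictly decreasing in $j$ (one finds $g(j)-g(j+1)=(d-1)/[((j-1)d+1)(jd+1)]>0$), and the additive term $dn^{d-1}$ is smaller. This yields $|\E|\lesssim_{d,\varepsilon} n^{d-d/(d^2-d+1)+\varepsilon}$, and dividing by $d!$ (one per ordering of a given unit-minor set) gives the upper bound on $f_d(n)$.

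For the \emph{lower bound}, the plan is to lift the classical planar Elekes construction through a block-triangular extension. Let $m=\lfloor n/(d-1)\rfloor$ and pick a set $U\subset\R^2$ of $m$ points admitting $\Omega(m^{4/3})$ unordered pairs $\{u,v\}$ with $|u_1v_2-u_2v_1|=1$ (the Elekes construction, tight against Szemer\'edi--Trotter). Organise $n=(d-1)m$ columns of a $d\times n$ matrix into $d-1$ groups indexed by $u\in U$: a \emph{base} group containing $v_{0,u}=(u_1,u_2,0,\dots,0)^T$, and for each $i\in\{3,\dots,d\}$ a group containing $v_{i,u}=(u_1,u_2,0,\dots,0,1,0,\dots,0)^T$ with the $1$ in coordinate $i$. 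These columns are pairwise distinct: columns from different groups disagree in coordinates $3,\dots,d$, and columns within a group disagree in the first two. For any unordered pair $\{u,v\}\subset U$ with $|u_1v_2-u_2v_1|=1$ and any choice $w_i$ from the $i$-th group for $i=3,\dots,d$, the matrix $[v_{0,u}\mid v_{0,v}\mid w_3\mid\cdots\mid w_d]$ is block-triangular with top-left block $\bigl(\begin{smallmatrix}u_1 & v_1\\ u_2 & v_2\end{smallmatrix}\bigr)$, zero bottom-left, and bottom-right identity $I_{d-2}$; its determinant equals $u_1v_2-u_2v_1=\pm 1$. Counting gives $\Omega(m^{4/3})\cdot m^{d-2}=\Omega(n^{d-2/3})$ distinct unit $d$-minors.

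Neither direction presents a serious obstacle. The upper bound is an essentially mechanical application of Theorem~\ref{main thm} once Lemma~\ref{no K222} provides the $K_{2,\dots,2}$-freeness; the lower bound is an elementary block-triangular lifting of the planar construction. The only fiddly step is confirming that the spectator columns leave the $2\times 2$ determinant undisturbed, which is immediate from expanding along the bottom $d-2$ rows.
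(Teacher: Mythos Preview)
Your overall strategy matches the paper's proof on both sides: the upper bound is Theorem~\ref{main thm} applied to the $d$-partite hypergraph on $d$ copies of the column set, and the lower bound is a block-triangular lift of the planar Szemer\'edi--Trotter/Elekes construction. Your lower-bound variant (carrying the planar coordinates in the first two rows of every spectator column, with an identity block below) is a harmless reparametrisation of the paper's construction.

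There is, however, a genuine gap in your upper bound. You define the edge relation by $\det(v_1,\dots,v_d)=\pm 1$ (equivalently $\det^2=1$) and then invoke Lemma~\ref{no K222}. But that lemma is stated and proved for the hypergraph with edge relation $\det=1$; its proof uses $\det(v_1^+-v_1^-,v_2^{\sigma_2},\dots,v_d^{\sigma_d})=1-1=0$, which fails if the two determinants are allowed to have opposite signs. In fact the $\lvert\det\rvert=1$ hypergraph \emph{does} contain a $K_{2,\dots,2}$: take $v_i^{\pm}=\pm e_i$ for $i=1,\dots,d$; then every transversal $(\sigma_1 e_1,\dots,\sigma_d e_d)$ has determinant $\prod_i\sigma_i=\pm 1$. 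So your appeal to Lemma~\ref{no K222} is invalid as written.

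The fix is immediate and is exactly what the paper does: take the edge relation to be $\det(v_1,\dots,v_d)=1$ (a single polynomial equation of degree $d$), apply Lemma~\ref{no K222} and Theorem~\ref{main thm} to that hypergraph, and then note that the number of minors with $\det=-1$ is bounded by the same argument (or by negating one column), so the total number of unit minors is at most twice the bound obtained. With this correction, your argument is complete and coincides with the paper's.
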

\begin{cor}\label{unit-volume}
The maximum number of unit volume $d$-simplices formed by $n$ points in $\R^d$ is $O(nf_d(n))=O_{d,\varepsilon}(n^{d+1-\frac{d}{d^2-d+1}+\varepsilon})$.
\end{cor}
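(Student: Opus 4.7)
The strategy is a standard reduction from unit-volume simplices to unit minors, combined with Theorem~\ref{minor}. The volume of the $d$-simplex with vertices $p_0, p_1, \ldots, p_d$ equals $\tfrac{1}{d!}|\det(p_1-p_0,\ldots,p_d-p_0)|$, so the unit-volume condition is $|\det(p_1-p_0,\ldots,p_d-p_0)| = d!$.

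Fix $P \subset \R^d$ with $|P|=n$. For each pair $(p_0, S)$ where $p_0\in P$ and $S = \{p_1,\ldots,p_d\} \subset P\setminus\{p_0\}$ defines a unit-volume simplex with $p_0$, the translated vectors $v_i := p_i - p_0$ form a $d$-tuple of distinct nonzero elements of $\R^d$ with $|\det(v_1,\ldots,v_d)| = d!$. After the uniform scaling $v_i \mapsto (d!)^{-1/d}\, v_i$, which preserves distinctness, the condition becomes the unit-minor condition $|\det|=1$. Thus for each fixed $p_0$, the number of admissible $S$ is at most the number of unit $d\times d$ minors of a $d\times (n-1)$ matrix with distinct columns, which by Theorem~\ref{minor} is at most $f_d(n-1) = O_{d,\varepsilon}\bigl(n^{d-d/(d^2-d+1)+\varepsilon}\bigr)$.

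Each unit-volume simplex is counted exactly $d+1$ times in the sum over pairs $(p_0,S)$, one per choice of base vertex, so summing over the $n$ choices of $p_0$ and dividing by $d+1$ yields
\begin{equation*}
\#\{\text{unit-volume simplices}\} \;\le\; \tfrac{n}{d+1}\,f_d(n-1) \;=\; O_{d,\varepsilon}\bigl(n^{d+1-d/(d^2-d+1)+\varepsilon}\bigr),
\end{equation*}
which is the desired bound.

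There is essentially no obstacle here: all of the analytic content has been packaged into Theorem~\ref{minor}, and the reduction itself is just a change of origin followed by a uniform rescaling. The only point worth verifying is that the scaling $v_i \mapsto (d!)^{-1/d}v_i$ preserves the hypothesis of $f_d$ that no two columns coincide, which is immediate since nonzero scalar multiplication is injective.
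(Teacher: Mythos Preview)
Your proposal is correct and follows essentially the same reduction the paper uses: the paper states just before Theorem~\ref{minor} that the number of unit-volume simplices is bounded by $n f_d(n)$, where $f_d(n)$ counts unit-volume simplices with one vertex fixed at the origin, and that this is equivalent to the unit-minor problem. You spell out the translation and the $(d!)^{-1/d}$ rescaling more carefully than the paper (which glosses over the $d!$ normalization), and you also note the overcount factor $d{+}1$, but these are cosmetic refinements of the same argument.
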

\begin{proof}[Proof of Theorem \ref{minor}]
Consider the $d$-uniform $d$-partite hypergraph $H$ where each part is the set of  $n$ column vectors of $M$, and $d$-tuple $(v_1,\dots, v_d)$ forms a hyperedge iff $\det(v_1,\dots, v_d)=1$. Clearly this is a semi-algebraic  hypergraph with bounded complexity. The upper bound is a simple application of Theorem \ref{main thm} for $k=d$ and the following lemma:
\begin{lem}\label{no K222}
The hypergraph $H$ does not contain $K_{2,\dots, 2}$.
\end{lem}
\begin{proof}Assume there exist $2d$ distinct points (or vectors) $\{v_i^+\}_{i=1}^d$ and $\{v_i^-\}_{i=1}^d$ in $\R^d$ such that $\det(v_1^{\sigma_1},\dots, v_d^{\sigma_d})=1$ for any choice of $\sigma_i\in\{+,-\}$. By multilinearity of the determinant we have for any choice of $\sigma_2,\dots, \sigma_d$ that
$$\det(x-y, v_2^{\sigma_2},\dots, v_d^{\sigma_d})=\det(x, v_2^{\sigma_2},\dots, v_d^{\sigma_d})- \det(y, v_2^{\sigma_2},\dots, v_d^{\sigma_d}).$$
In particular, $\det(v_1^+-v_1^-, v_2^{\sigma_2},\dots, v_d^{\sigma_d})=1-1=0$. Take any $x_1$ on the line that passes through $v_1^+$ and $v_1^-$, then 
 $\det(x_1-v_1^-, v_2^{\sigma_2},\dots, v_d^{\sigma_d})=0$ because $x_1-v_1^-$ is parallel to $v_1^+-v_1^-$. Thus $\det(x_1, v_2^{\sigma_2},\dots, v_d^{\sigma_d})=1$. Similar statements hold for other indices. Therefore we must have $\det(x_1,\dots, x_d)=1$ for any $x_i$ on the line through $v_i^+$ and $v_i^-$.
 
 Take a generic hyperplane through 0 that intersects all the lines $v_i^+v_i^-$ for $i\in[d]$ (such a hyperplane always exists because we only require the hyperplane  does not contain $d$ fixed lines). Let $x_i$ be the intersection of this hyperplane with the line through $v_i^+$ and $v_i^-$. By the above argument $\det(x_1,\dots, x_d)=1$, but as the vectors $x_1,\dots, x_d$  are linearly dependent, this is a contradiction.
\end{proof} 
Note that  $H$ can contain $K_{1,u,\dots, u}$, for $u=n/d$ by choosing $P_1=\{(1,0,\dots, 0)\}$, $P_2=\{(x_2,1,0,\dots, 0)\}$, $P_3=\{(0, x_3,1,0,\dots, 0)\}$ and $P_d=\{(0,\dots, 0, x_{d},1)\}$ for $x_i\in[u]$. 
This suggests we cannot directly apply results for graphs or even $l$-uniform hypergraphs for $l<d$.

To obtain the lower bound of $f_d(M)$: pick the tight example $P_1\times P_2$ in \cite{minor in matrix} for $d=2$ on the $x_1x_2$ plane (which comes from the tight example of Szemer\'edi-Trotter theorem). Then choose $P_3,\dots, P_d$ as above. We have at least $\sim n^{4/3}\times n^{d-2}=n^{d-2/3}$ unit minors of form
$$
 \begin{pmatrix}
a_{11} & a_{12} & 0 & \dots & 0 & 0\\
a_{21} & a_{22} & x_3 & \dots & 0 & 0\\
0 & 0 & 1 &\dots &0& 0\\
\vdots & \vdots& \vdots & \ddots & \vdots &\vdots\\
0 & 0 & 0 &\dots & 1& x_{d}\\
0 & 0 & 0 & \dots & 0& 1
\end{pmatrix}.
$$
This finishes the proof of Theorem \ref{minor}
\end{proof}

\subsection{Intersection hypergraphs}
Recall the intersection hypergraph $H(S)$ of a set $S$ of geometric objects in $\R^d$ is the $d$-uniform hypergraph on the vertex set $S$ where $d$ vertices form a hyperedge if and only if the corresponding sets have nonempty intersection. In this subsection, we find a nontrivial upper bound for the number of hyperedges in $H(S)$ given it is $K_{u,\dots, u}$-free and $S$ is taken from some $s$-dim family of semi-algebraic sets.

We say $F$ is an \emph{$s-$dimensional family of semi-algebraic sets with description complexity $t$ in $\R^d$} if each object in $F$ is a semi-algebraic set in $\R^d$ determined by at most $t$ polynomials $f_1,\dots, f_t$, each of degree at most $t$ and the coefficients of $f_1,\dots, f_t$ belong to a $s$-dim variety with degree at most $t$ in $\R^{t{d+t\choose t}}$. Informally, each semi-algebraic set in $F$ is determined by $s$ parameters. 
For example, hyperplanes or half-spaces in $\R^d$ form a $d$-dimensional  family with description complexity 1, spheres in $\R^d$ form a $(d+1)$-dimensional family with description complexity 2 and $(d-1)$-dim simplices in $\R^d$ form a $d^2$-dimensional family with description complexity $t+1$ (since each simplex is determined by $d$ points in $\R^d$ and by $t+1$ linear inequalities). 

Let $S$ be a set of $n$ semi-algebraic sets taken from a $s$-dimensional family with degree $t$. 
It is not difficult to see that the intersection hypergraph $H(S)$ is a semi-algebraic hypergraph with bounded complexity (which depends on $t,s$ and $d$). Indeed,  $(S_1,\dots, S_d)\in S^d$ forms a hyperedge if and only if there exists some $y\in R^d$ such that $y\in S_i$ for all $i\in[d]$. This is the projection of the semi-algebraic set $T=\{(y,S_1,\dots, S_d): y\in S_i, \forall i\in [d]\}$ along the first axis, hence remains a semi-algebraic set with description complexity only depending on that of $T$ and $d$ by Theorem 2.2.1 in \cite{real alg geo}. 
 Applying Theorem  \ref{main thm} to this hypergraph we get the following bound on the number of hyperedges in $H(S)$:
\begin{cor}
Let $S$ be $n$ semi-algebraic sets taken from an $s$-dimensional family with degree $t$ in $\R^d$. If $H(S)$ is $K_{u,\dots, u}$-free for some $u$, then $H(S)$ has $O_{t,d,s,u,\varepsilon}(n^{d-\frac{d}{(d-1)s+1}+\varepsilon})$ hyperedges. In particular, $n$ spheres in $\R^d$ form at most $O_{d,u, \varepsilon}(n^{d-1/d+\varepsilon})$ intersections if their intersection hypergraph is $K_{u,\dots, u}$ -free.
\end{cor}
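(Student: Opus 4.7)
The plan is to view $H(S)$ as a $d$-uniform $d$-partite semi-algebraic hypergraph in parameter space and apply the moreover part of Theorem \ref{main thm} with $\vec e = (s,\ldots,s)$ and $\vec n = (n,\ldots,n)$.

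First I pass from the $d$-uniform hypergraph $H(S)$ to the $d$-partite hypergraph $\tilde H = (P_1,\ldots,P_d, \tilde\E)$ with $P_i = S$ and $(s_1,\ldots,s_d) \in \tilde\E$ iff $\bigcap_i s_i \neq \emptyset$. Up to a factor of $d!$ this preserves the edge count, and a greedy argument shows that $K_{u,\ldots,u}$-freeness of $H(S)$ implies $K_{u',\ldots,u'}$-freeness of $\tilde H$ for $u' = du$: from $d$ blocks $A_i \subseteq P_i$ of size $du$ one extracts pairwise disjoint sub-blocks $B_i \subset A_i \setminus (B_1 \cup \cdots \cup B_{i-1})$ of size $u$, since the set from which $B_i$ is drawn has at least $(d-i+1)u \geq u$ elements.

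Next I identify each $\sigma \in S$ with the coefficient vector of its $t$ defining polynomials, giving $n$ points in $\R^N$ with $N = t\binom{d+t}{t}$, lying by hypothesis in a fixed $s$-dimensional variety $V$ of degree at most $t$. I decompose $V$ into its $O_{t,d}(1)$ irreducible components, each of dimension $\leq s$ and bounded degree, partition each $P_i$ accordingly, and sum over the $O(1)$ tuples of one component per part. For $\tilde\E$ to be a bounded-complexity semi-algebraic relation on parameter space I invoke the observation recalled just above the corollary: $\bigcap_i s_i \neq \emptyset$ is the projection along $y \in \R^d$ of a semi-algebraic set of complexity $O_{t,d}(1)$, and by Tarski-Seidenberg (Theorem 2.2.1 of \cite{real alg geo}) the projected relation remains semi-algebraic with complexity depending only on $t, d$. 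Applying the moreover part of Theorem \ref{main thm} then yields $|\tilde\E| \lesssim F^\varepsilon_{(s,\ldots,s)}(n,\ldots,n)$, and by item (iii) of the remark following Theorem \ref{main thm} (with the ``$d$'' there playing the role of our $s$) this simplifies to $O_{t,d,s,u,\varepsilon}\!\left(n^{d - d/((d-1)s+1) + \varepsilon}\right)$, which is the claimed bound on $|\mathcal{E}(H(S))|$.

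For the spheres corollary, a sphere in $\R^d$ is determined by $d$ center coordinates and a radius, so spheres form an $(d+1)$-dimensional family of description complexity $2$; plugging $s = d+1$ gives exponent $d - d/((d-1)(d+1) + 1) = d - 1/d$, as asserted. I expect the only mild obstacle to be the bookkeeping in the setup paragraph: verifying explicitly that Tarski-Seidenberg does not inflate the description complexity beyond an $O_{t,d}(1)$ constant, and confirming that the $K_{u,\ldots,u}$-free condition really passes from the symmetric hypergraph $H(S)$ to its $d$-partite ordering $\tilde H$. Both are standard and, once recorded, reduce the corollary to a direct specialization of Theorem \ref{main thm}.
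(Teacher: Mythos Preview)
Your proposal is correct and follows the same approach as the paper: identify each set with its parameter point, observe via Tarski--Seidenberg that the intersection relation is semi-algebraic of bounded complexity, and then apply Theorem~\ref{main thm} (specifically its ``moreover'' clause with $e_i=s$) together with remark~(iii) to obtain the exponent $d-\frac{d}{(d-1)s+1}$. The paper's argument is the single sentence ``Applying Theorem~\ref{main thm} to this hypergraph we get the following bound,'' and you have simply filled in the details it leaves implicit---the passage to a $d$-partite hypergraph, the greedy extraction showing $K_{u',\dots,u'}$-freeness survives, and the decomposition of the parameter variety into irreducible components so that the ``moreover'' clause applies.
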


More generally, we can extend this result to counting intersections among different families of sets, such as intersections between triangles and line segments in $\R^3$. More precisely, for $i=1,\dots, k$, let $P_i$ be $n_i$ semi-algebraic sets taken from a $s_i$-dim family in $\R^d$. Their intersection hypergraph is defined on $P_1\times\dots\times P_k$ where $(p_1,\dots, p_k)$ form a hyperedge if and only if they have a nonempty intersection. If this hypergraph is $K_{u,\dots, u}$-free for some $u>0$ then the number of intersections is $O(F^\varepsilon_{s_1,\dots, s_k}(n_1,\dots, n_k))$.

\section{Discussion}\label{section:discussion}

Perhaps the most important question raised but left unanswered by this paper is whether Theorem \ref{main thm} is tight.  When $k=2$, or $H$ is a graph, it is known to be tight when $d_1=d_2=2$ and almost tight for $d_1=d_2=d\geq 3$ (for example see \cite{Sheffer-lower bound}). No tight example is known for hypergraphs.
Another interesting question is to know the dependency of $u$ in the expression: does the theorem say anything meaningful when $u$ increases with $n$, such as $u=\Theta(\log n)$?

Furthermore, we feel that the applications of our main theorem are still largely unexplored. While it immediately gives bounds for many geometric problems, they are usually not the best ones (eg. the unit-triangle problem). It would be interesting to find an instance where hypergraphs are more effective than graphs. Such an example should share in common with the unit-minor problem that the constructed hypergraph contains $K_{1,u,\dots, u}$ for large $u$ but does not contain $K_{u',\dots, u'}$ for some fixed $u'$.  

In the almost-unit-area problem, without the condition no $K_{u,u,u}$ we can have $\Theta(n^3)$  triangles of area in the range $[0.9,1.1]$ by choosing 3 points reasonably far apart that form a unit area and then dividing the remaining $n-3$ points equally into small neighborhoods of those 3 points. Would we get a better result by imposing an upper bound on the ratio between the maximum and the minimum distances among the points?


Finally, we would like to improve the bound for the minor problem since the gap between the lower and upper bounds in Theorem \ref{minor} is quite large for big $d$. To improve the lower bound, instead of building from the grid example in two dimensions, we can choose points from a grid in $\R^d$, or a multiple of grids. 



\appendix
\section{Proof of generalized Erd\H{o}s's result }\label{appendix: Erdos}
 We suspect the second statement of Theorem \ref{Erdos} may be stated somewhere but since we cannot find a reference, its proof is included here for completeness. 
We use induction by $k$. It clearly holds for $k=1$. For $k\geq 2$, assume it holds for $k-1$. Let
$$Q:=\#\{(y, x_1,\dots, x_{u_1}): y\in P_2\times\dots\times P_{k}, x_i\in P_1, (x_i,y)\in \mathcal{E} \quad \forall i\in [u] \}.$$
We count $Q$ by two ways. For each choice of $(x_1,\dots, x_u)$ 
define a new hypergraph on  $P_2\times\dots\times P_k$ where $y$ is a hyperedge iff $(x_i, y)\in \mathcal{E}$ for all $i\in [u]$.  This $(k-1)$-uniform hypergraph does not contain $K_{u_2,\dots, u_k}$, hence by induction assumption:
\begin{equation}\label{eq-Erdos1}
E\lesssim {n_1\choose u_1} n_2\dots n_k\left(n_k^{-1/u_2\dots u_{k-1}}+\sum n_i^{-1}\right).
\end{equation}
On the other hand, for each $y\in P_2\times\dots\times P_k$, let $N_y$ denote the number of $x\in P_1$ such that $(x,y)\in \mathcal{E}$. Then by H\"older inequality:
\begin{equation}\label{eq-Erdos2}
E=\sum_{y} {N_y\choose u_1}\gtrsim \frac{(\sum_{y:N_y\geq u_1} N_y)^{u_1}}{(n_2\dots n_k)^{s_1-1}} \gtrsim \frac{(|\mathcal{E}|-n_2\dots n_k)^{u_1}}{(n_2\dots n_k)^{s_1-1}}.
\end{equation}
Combining \eqref{eq-Erdos1} and \eqref{eq-Erdos2} we get the desired bound for $|\mathcal{E}|$.
\qed 

\section{Proofs of lemmas in \ref{sec:property_EF} }\label{appendix: property E}
\noindent\textit{Proof of Lemma \ref{proof-matrix-equ}:}
A direct calculation yields 
\begin{align*} 
\sum_{j\neq i} d_j(1-\alpha_j)& =\sum_{j\neq i} \frac{\frac{d_j}{d_j-1}}{k-1+\sum_l \frac{1}{d_l-1}}=\frac{\sum_{j\neq i}\left[ 1+\frac{1}{d_j-1}\right]}{k-1+\sum_l \frac{1}{d_l-1}}\\
&=\frac{k-1+\sum_{j\neq i}\frac{1}{d_j-1}}{k-1+\sum_l 1/(d_l-1)}=1-\frac{\frac{1}{d_i-1}}{k-1+\sum_l 1/(d_l-1)}=\alpha_i.
\end{align*}
Rearranging we get $\sum_{j\neq i} d_j \alpha_j+ \alpha_i=\sum_{j\neq i} d_j$, true for all $i=1,\dots, k$, hence \eqref{matrix} holds.
\qed
\\
\\
\textit{Proof of Lemma \ref{compare d and d-1}:} 
By examining the formula of $F^\varepsilon_{\vec{d}}(\vec{n})$ in \eqref{defi_F}, we realize it is enough to prove $E_{\vec{d}_I-\mathfrak{e}_i}(\vec{n})\leq E_{\vec{d}_I}(\vec{n})$ whenever $i\in I$. Without loss of generality we can assume $I=[k]$ and $i=1$. In other words, we only need to prove $E_{d_1-1,d_2,\dots, d_k}(\vec{n})\leq E_{d_1,d_2,\dots, d_k}(\vec{n})$ given $n_i\geq n_j^{1/(d_j)}$ for any $j\neq i$. Let $M_1=k-1+\frac{1}{d_1-2}+\sum_{i=2}^k \frac{1}{d_i-1}$ and $M_2=k-1+\frac{1}{d_1-2}+\sum_{i=2}^k \frac{1}{d_i-1}$, we can write the inequality $E_{d_1-1,d_2,\dots, d_k}(\vec{n})\leq E_{\vec{d}}(\vec{n})$ as

\begin{align}
n_1^{1-\frac{1/(d_1-2)}{M_1}}\prod_{i=2}^k n_i^{1-\frac{1/(d_i-1)}{M_1}}& \leq n_1^{1-\frac{1/(d_1-1)}{M_2}}\prod_{i=2}^k n_i^{1-\frac{1/(d_i-1)}{M_2}}\nonumber\\
\iff \prod_{i=2}^k n_i^{\frac{1}{d_i-1}(\frac{1}{M_2}-\frac{1}{M_1})}&\leq n_1^{\frac{1}{M_1(d_1-2)}-\frac{1}{M_2(d_1-1)}}\nonumber
\\
\label{M1_M2}
\iff \prod_{i=2}^k n_i^{\frac{1}{(d_i-1)M_1M_2(d_1-1)(d_1-2)}}&\leq n_1^{\frac{k-1+\sum{i>1}\frac{1}{d_i-1}}{M_1M_2(d_1-1)(d_1-2)}}\\
\label{M1_M2_2}
\iff\prod_{i\geq 2} n_i^{1/(d_i-1)}&\leq n_1^{\sum_{i\geq 2} d_i/(d_i-1)}
\end{align}
In \eqref{M1_M2} we use $M_1-M_2=\frac{1}{d_1-2}-\frac{1}{d_1-1}=\frac{1}{(d_1-1)(d_2-1)}$ and $M_2(d_1-1)-M_1(d_1-2)=k-1+\sum_{i=2}^k \frac{1}{d_i-1}$. Take both sides to the power of $M_1M_2(d_1-1)(d_1-2)$ we get \eqref{M1_M2_2}, which holds because $n_i\leq n_1^{d_i}$ for any $i\geq 2$ by assumption.
\qed 
\\
\\
\textit{Proof of Lemma \ref{E_d dominant}:}
To prove $E_{\vec{d}}(\vec{n})\prod n_i^\varepsilon$ is the dominant term, it is enough to prove for any non-empty $I\subset [k]$:   \begin{equation}\label{compare E_d and E_d_I}
E_{\vec{d}}(\vec{n})\geq E_{\vec{d}_I}(\vec{n}_I)\prod_{i\notin I} n_i.
\end{equation}
\noindent\textit{Claim:} For each $i\in [k]$,   if $n_i^{-1/d_i}\prod_j n_j\geq  n_i E_{\pi_i(\vec{d})}(\pi_i(\vec{n})$ then $E_{\vec{d}}(\vec{n})\geq n_i E_{\pi_i(\vec{d})}(\pi_i(\vec{n})$ .

\noindent\textit{Proof of claim:} Both inequalities are equivalent to 
$\prod_{j\neq i} n_j^\frac{1/(d_j-1)}{k-2+\sum_{l\neq i}1/(d_l-1)}\geq n_i^{1/d_i}$ by rearrangement.\qed

We now prove  \eqref{compare E_d and E_d_I} via induction by $|I|$. By the claim, it holds whenever $|I|=k-1$ because our assumption \eqref{assume VC bound dominate} implies $n_i^{-1/d_i}\prod_i n_i\geq  n_i E_{\pi_i(\vec{d})}(\pi_i(\vec{n})$. Assume \eqref{compare E_d and E_d_I} holds for any $I\subset [k]$ with $|I|=l$, we will prove it holds for any $I\subset[k]$ such that $|I|=l-1$. Take some $i\notin I$ and let $J=I\cup\{i\}$. By induction assumption $E_{\vec{d}}(\vec{n})\geq E_{\vec{d}_{J}} (\vec{n}_J)\prod_{j\notin J} n_j$. By our assumption
\eqref{assume VC bound dominate}, $n_i^{-1/d_i}\prod_j n_j\geq E_{\vec{d}_I}(\vec{n}_I)\prod_{j\notin I} n_j$; dividing both sides by $\prod_{j\neq J} n_j$ we get $n_i^{-1/d_i}\prod_{j\in J} n_j\geq E_{\vec{d}_I}(\vec{n}_I) n_i$. Applying
the above claim for $J$ instead of $[k]$, we get $E_{\vec{d}_{J}} (\vec{n}_J)\geq n_i E_{\vec{d}_I}(\vec{n}_I)$ and thus $E_{\vec{d}}(\vec{n})\geq E_{\vec{d}_{J}} (\vec{n}_J)\prod_{j\notin J} n_j\geq E_{\vec{d}_{I}} (\vec{n}_I)\prod_{j\notin I} n_j$. This means \eqref{compare E_d and E_d_I} also holds for $I$.
\qed

\end{document}